\newtheorem{theorem}{Theorem}
\newtheorem{lemma}[theorem]{Lemma}
\newtheorem{definition}[theorem]{Definition}
\newtheorem{proposition}[theorem]{Proposition}
\providecommand{\prt}[1]{\left( #1 \right)}
\providecommand{\PP}{\mathbb{P}}
\providecommand{\1}{\textbf{1}}
\providecommand{\0}{\textbf{0}}
\def\0{{\bf 0}}
\def\R{\mathbb{R}}
\def\Z{\mathbb{Z}}
\def\E{\mathbb{E}}
\def\cA{\mathcal{A}}
\providecommand{\forbid}{R_\gamma}
\providecommand{\smallangles}{B_\gamma}
\providecommand{\largeangles}{U_\gamma}
\providecommand{\Dm}{D(\mu)}
\providecommand{\cyclemod}{reconnected cycle }
\providecommand{\arcmod}{reconnected arcs }
\providecommand{\Cyclemod}{Reconnected Cycle }
\providecommand{\Arcmod}{Reconnected Arcs }
\begin{document}

\title{Improved Mixing Rates of Directed Cycles\\ with Additional Sparse Interconnections}

\author{Balázs Gerencsér\thanks{B. Gerencs\'er is with the Alfr\'ed R\'enyi Institute of Mathematics, Budapest, Hungary and E\"otv\"os Lor\'and University, Department of Probability and Statistics, Budapest, Hungary (email: {\tt\small gerencser.balazs@renyi.hu}). His research was supported by the J\'anos Bolyai Research Scholarship of the Hungarian Academy of Sciences.}
\and Julien M. Hendrickx\thanks{J. Hendrickx is with the ICTEAM institute, UCLouvain, Belgium. (email: {\tt\small julien.hendrickx@uclouvain.be}). His research is supported by the Incentive Grant
for Scientific Research (MIS) ``Learning from Pairwise Data'', by the KORNET project from F.R.S.-FNRS and by the ``RevealFlight'' ARC at UCLouvain.}}

\maketitle

\begin{abstract}
We analyze the absolute spectral gap of Markov chains on graphs obtained from a cycle of $n$ vertices and perturbed only at approximately $n^{1/\rho}$ random locations with an appropriate, possibly sparse, interconnection structure. Together with a strong asymmetry along the cycle, the gap of the resulting chain can be bounded inversely proportionally by the longest arc length (up to logarithmic factors) with high probability, providing a significant mixing speedup compared to the reversible version.
\end{abstract}

\section{Introduction}

This work focuses on understanding how the mixing properties of Markov chains, and how they can be improved by making as little change as possible on the initial connection graph. 
Some impressive results in that direction are already available. The authors of \cite{krivelevich2015smoothed} show how a rather general class of initial graphs on $n$ vertices can be turned into an expander, thereby making the simple random walk rapidly mixing, by adding approximately $\epsilon n $ random edges. A more refined analysis resulting in stronger bounds is performed in \cite{hermon2022universality} and its follow-up \cite{hermon2023weighted}, where a (possibly weighted) random perfect matching is added. These authors are then able to determine a mixing-time with cut-off. However, this line of works always involves adding a number of edges that \emph{grows linearly} with the number of vertices. By contrast, we aim at understanding if significant speed-ups can still be obtained by adding a much smaller number edges that create shortcuts, provided \emph{we adapt the transition probabilities} of the initial chain, specifically without requiring them to remain reversible.

Since determining the mixing time is currently out of reach in the settings we consider, we will aim at approximating the (absolute) spectral gap, 
    measuring the mixing efficiency of the Markov chain in the asymptotic regime.
\begin{definition}
  For any irreducible doubly stochastic matrix $A$ let its \emph{spectral gap} be
    \begin{equation}\label{def:spectral_gap}
    \lambda(A) = \min\left\{1-|\mu|~:~\mu\neq 1 \text{ is an eigenvalue of }
    A \right\}.\end{equation}
\end{definition}

We focus on the cycle graph, and show that the combination of transition probabilities adaptation and addition of a small number of edges does indeed significantly outperform any of these two operations taken individually. Indeed, it was shown in \cite{gerencser2011markov} that optimizing the transition probabilities on a cycle graph could not provide any significant speedup even when allowing for non-reversibility. On the other hand, one can verify that in the reversible case, the spectral gap of a cycle on $n$ nodes to which one adds edges incident to $k\ll n$ nodes will always remain $O(k^2/n^2)$.
By contrast, in our recent work \cite{gerencser2019improved}, we have shown that a significant speed-up to $\Theta(k\log^{-\gamma} k/n)$ (for $\gamma>4$) could be obtained if a small number $k\approx n^{1/\rho}$ (for $\rho>1$) of randomly selected positions were perturbed and interconnected by a \emph{fully connected graph}, and the walk on the rest of the cycle was made directed and deterministic.
The current paper generalizes these results by showing that the speed-up is present for more general interconnection structures, provided it is sufficiently expanding, see Definition \ref{def:cyclemod} and Theorem \ref{thm:main} for a precise statement of the model and the result.

We note the existence of several alternative ways of improving the spectral gaps of Markov chains. The first one relies on optimizing the individual transition probabilities in an algorithmically accessible way \cite{boyd_and_al:fastmix2004,boyd_and_al:fastmix2009}, which then can be then exploited and carried further for special cases, e.g.\ for random geometric graphs \cite{boydrng2005}. But these works almost all impose the reversibility of the Markov chain. Moreover, we have seen above that in the case of the cycle graph, optimization of the probabilities alone cannot lead to any significant speed-up, even in the absence of reversibility requirements. 
In fact, it can even be shown that appropriate homogeneous reversible random walks are optimal in some sense \cite{pauljulien2015spectralgap}.
A different recent technique is based on alternating between stepping with the original Markov chain and moving by a permutation fixed apriori, see \cite{chatterjee2020speeding, ben2023cutoff, he2022markovpermutation} for the initial theory, refined cut-off analysis for random permutations, and some specialized case study.  But this does clearly involve very significant modification of the Markov chain structure.  
Another possibility is to reduce the backtracking of the chain, in \cite{diaconis2013spectral} a non-backtracking component is introduced and indeed an improvement of the spectral gap is shown.

Our paper is organized as follows: We introduce our precise models of Markov chains and state our main results in Section \ref{sec:Model-Results}. We then show in Section \ref{sec:lowdimensional} how the spectral gap can be characterized in terms of a lower-dimensional modified eigenvalue problem, and what it entails for the corresponding eigenvectors. Sections \ref{sec:small_angles} and \ref{sec:large_angles} focus then on the analysis of eigenvalues with respectively small and large arguments, and their combinations allows establishing the results of Section \ref{sec:Model-Results}. We compare our results with experimental studies in Section \ref{sec:numerics} and observe that our bounds (without the polylogarithmic factors) appear to predict the actual behavior of the spectral gap. We finish by discussing our result and some possible further works in Section \ref{sec:ccl}.

\section{Models and Main Results}\label{sec:Model-Results}

\subsection{Models}

We now formally define the random Markov transition matrix model outlined in the introduction.

\begin{definition}[\cyclemod model]\label{def:cyclemod}
Let $n\ge k$ and $A\in \R^{k\times k}$ be a symmetric doubly stochastic matrix. We define the \emph{\cyclemod} $H_n(A)$ as the random Markov transition matrix corresponding to the directed weighted graph obtained by the three steps below, depicted in Figure \ref{fig:3steps_H}
\begin{enumerate}[wide, labelwidth=!, labelindent=4pt,topsep=3pt,itemsep=0pt]
    \item Start with a directed cycle on $n$ vertices where every edge has a weight $1$.
    \item Uniformly randomly select $k$ distinct edges. Randomly pick one of them to be denoted by $(e_1,b_2)$ and label the subsequent ones (following the order of the cycle) by $(e_i,b_{i+1})$, $i=1,\dots, k$, with $k+1\equiv 1$. Then, remove these edges. 
    \item For each $i,j\in 1,\dots,k$, add a directed edge from $e_j$ to $b_{i+1}$ with weight $A_{ij}=A_{ji}$ (this only creates an actual edge when $A_{ij}\neq 0$)
\end{enumerate}
\end{definition}

\begin{figure}
    \centering
    \includegraphics[width=0.75\textwidth]{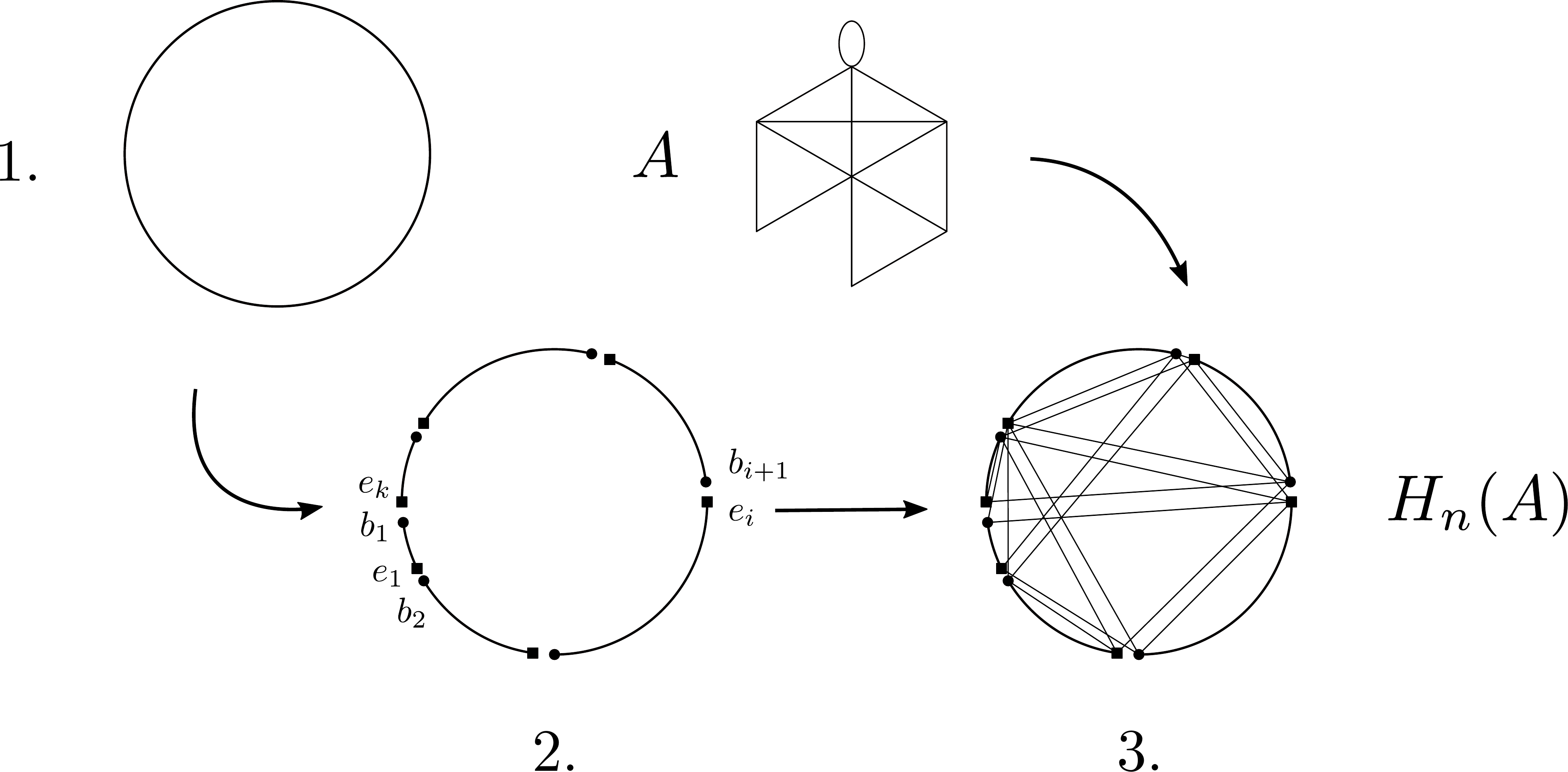}
    \caption{Representation of the random process generating the \cyclemod $H_n(A)$ using $A$ according to Definition \ref{def:cyclemod}.}
    \label{fig:3steps_H}
\end{figure}

Since $A$ is doubly stochastic, one directly verifies that $H_n(A)$ is also doubly stochastic. However, it is (highly) non-symmetric. 
Observe that the length of the arcs between the removed edges $(e_i,b_{i+1})$ are not independent because their sum is fixed, which creates difficulties in the analysis. To circumvent this problem, we will first study a related models with independent arc lengths, but for which the total size is no longer pre-determined. 

\providecommand{\Lav}{\bar L}
\providecommand{\Lsqav}{\overline {L^2}}

\begin{definition}[\arcmod model]\label{def:arcmod}
Let $L>0$ and $A\in \R^{k\times k}$ be a symmetric doubly stochastic matrix for some integer $k$. We define the \emph{\arcmod} $G_L(A)$ as the random Markov transition matrix corresponding to the directed weighted graph obtained by the three steps below. 
\begin{enumerate}[wide, labelwidth=!, labelindent=4pt,topsep=3pt,itemsep=0pt]
\item Choose $k$ independent random values $L_i$ from the $Geo(1/L)$ distribution (taking minimal value 1), with $i =1,\dots k$.
\item For each $i=1,\dots,k$, build a directed arc of $L_i$ nodes (and thus $L_i-1$ edges) connected by edges of weight 1. We use a double index for the nodes: $(i,1), (i,2), \dots (i,L_i)$, and there is thus a transition probability 1 from each $(i,\ell)$ to $(i,\ell+1)$ when $\ell< L_i$.
\item For each $i,j\in 1,\dots,k$, add a directed edge from $(j,L_j)$ to $(i+1,1)$ with weight $A_{ij}=A_{ji}$ (this only creates an actual edge when $A_{ij}\neq 0$)
\end{enumerate}
\end{definition}
Again, one can directly verify that $G_L(A)$ is doubly stochastic. Moreover, there is a parallel between the two models, with the nodes $(j,L_j)$ and $(i+1,1)$ in $G_L(A)$ playing a role similar as $e_j$ and $b_{i+1}$ in $H_n(A)$. The exact correspondence will be detailed in Lemma \ref{lem:modelconditioning}. In the sequel we will denote by $(L_i)$ the collection of the $k$ random variables $L_1,\dots,L_k$.

Our results will be asymptotic in $k$ and $n$ or $L$. Hence we need to define a class allowable matrices $A$

\begin{definition}\label{def:class_A}
    We call $\cA_{c,\alpha}$ the set of symmetric doubly-stochastic matrices $A$ for which $\lambda(A)\geq \Delta_k:= c \log^{-\alpha} k$ for some fixed $c>0,\alpha \geq 0$ independent of $A$ and $k$, where $k$ is the dimension of the matrix and $\lambda(.)$ was defined in  \eqref{def:spectral_gap}.
\end{definition}

\subsection{Main Results}

We first state our theorem for our proxy \arcmod model $G_L(A)$ with independent arc lengths, which also be considered first in our analysis.
\begin{theorem}
  \label{thm:indep}
  Assume $k,L \rightarrow \infty$ in parallel while $\rho_l < \log L / \log k < \rho_u$ for some constants $0<\rho_l<\rho_u<\infty$, and a matrix series $A_k\in\cA_{c,\alpha}$ is given for some $c,\alpha$. For any constant $\gamma > 7 + \alpha$ the spectral gap of the random transition probability matrix $G_L(A_k)$ can be asymptotically almost surely (a.a.s.) as follows:
  $$
  \lambda(G_L(A_k)) \ge \frac{1}{L\log^\gamma k}.
  $$
\end{theorem}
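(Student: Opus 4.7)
The plan is to leverage the rigid structure of $G_L(A_k)$: every non-terminal node of an arc has a single outgoing unit-weight edge, so any right eigenvector $v$ with eigenvalue $\mu$ must satisfy $v_{(i,\ell)} = \mu^{\ell - 1} v_{(i,1)}$ along each arc. Setting $w_i := v_{(i,1)}$ and substituting into the eigenvector relation at the terminal node $(j, L_j)$ of each arc collapses the problem to the $k$-dimensional nonlinear equation $AS\,w = D(\mu)\,w$, where $S$ is the cyclic shift on $\{1,\dots,k\}$ and $D(\mu) = \mathrm{diag}(\mu^{L_1},\dots,\mu^{L_k})$. The spectrum of $G_L(A_k)$, apart from a possible zero, is then exactly the set of $\mu$ for which $\det(AS - D(\mu)) = 0$; I expect Section~\ref{sec:lowdimensional} to formalize this reduction. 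Proving the theorem thus amounts to showing that, a.a.s., no $\mu \neq 1$ with $|\mu| > 1 - 1/(L\log^\gamma k)$ solves this characteristic equation.

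A preliminary step is to use standard concentration for i.i.d. geometric variables to condition on a good realization where $\max_j L_j \le L\log^2 k$, every $L_j \ge 1$, and the empirical distribution of the $L_j$'s is close to its mean; the complementary probability is polynomially small in $k$ and is absorbed into the a.a.s. statement. Writing $\mu = r e^{i\theta}$, I would then split the analysis on $|\theta|$ at a carefully chosen threshold $\theta^{\star}$, mirroring the decomposition of the paper into Sections~\ref{sec:small_angles} and~\ref{sec:large_angles}.

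In the small-angle regime $|\theta| \le \theta^{\star}$ every $\mu^{L_j}$ is close to $1$, so $D(\mu) = I + E$ with $\|E\|$ controlled by $|\mu-1|\cdot \max_j L_j$, and the equation reads $(AS - I)w = Ew$, a perturbation of $ASw = w$. The unperturbed problem has $1$ as a simple isolated eigenvalue of $AS$: since $A \in \cA_{c,\alpha}$ gives $\lambda(A) \ge \Delta_k = c \log^{-\alpha} k$, since $A$ is symmetric, and since $S$ is unitary and stabilizes $\mathbf{1}^\perp$, the restriction of $AS$ to $\mathbf{1}^\perp$ is a strict contraction of operator norm at most $1 - \Delta_k$, so $I - AS$ restricted to $\mathbf{1}^\perp$ has smallest singular value at least $\Delta_k$. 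Splitting $w = \alpha\mathbf{1} + w_\perp$ and projecting the equation on $\mathbf{1}$ and $\mathbf{1}^\perp$ separately, a perturbation argument then converts this isolation into a lower bound on $|\mu - 1|$ of order $\Delta_k / \max_j L_j$, yielding a gap of at least $1/(L\log^{\alpha + 2 + O(1)} k)$ in this regime.

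In the large-angle regime $|\theta| > \theta^{\star}$ the phases $e^{i\theta L_j}$ spread over the unit circle because the $L_j$ are random, so $D(\mu)$ behaves like a generic unitary diagonal and $D(\mu) - AS$ should be well-conditioned. The strategy is to discretize the admissible set of $(r,\theta)$ into a sufficiently fine $\varepsilon$-net, bound at each grid point the probability that the smallest singular value of $D(\mu) - AS$ falls below a threshold using anti-concentration estimates on $\{\theta L_j \bmod 2\pi\}$, and take a union bound. The main obstacle will be this large-angle case: controlling the smallest singular value of a random non-Hermitian matrix uniformly in a continuous parameter $\mu$ requires both sharp probabilistic estimates and careful bookkeeping of polylogarithmic factors, so that the final exponent $\gamma > 7 + \alpha$ absorbs every logarithmic loss from the concentration of the $L_j$'s, the size of the $\varepsilon$-net, and the spectral gap $\Delta_k$.
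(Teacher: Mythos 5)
Your reduction to the $k$-dimensional equation $ASw=D(\mu)w$ and the overall architecture (conditioning on no exceptionally long arc, then splitting on $|\arg\mu|$) do match the paper. The genuine gap is that your two mechanisms, taken together, do not cover the whole region $\{\mu\neq 1,\ |\mu|\ge 1-1/(L\log^\gamma k)\}$, no matter how the threshold $\theta^{\star}$ is chosen. Your small-angle step treats $D(\mu)=I+E$ and needs $\|E\|$ to be dominated by the isolation $\Delta_k=c\log^{-\alpha}k$ of the eigenvalue $1$ of $AS$ on $\mathbf{1}^{\perp}$; since $\|E\|=\max_i|\mu^{L_i}-1|$ is of order $|\arg\mu|\cdot\max_i L_i$ (and even the typical entries are of order $|\arg\mu|\cdot L$), this only excludes $\mu$ with $|\mu-1|\lesssim \Delta_k/\max_i L_i\approx 1/(L\log^{2+\alpha}k)$ --- consistent with the bound you state, but a far smaller neighbourhood of $1$ than the sector you must cover, and tuning constants cannot fix this because $\Delta_k\to 0$ while the perturbation at angles $\sim 1/(L\log k)$ is of size $\Theta(1/\log k)$ or larger under the long-arc event. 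Your large-angle mechanism (phases $\theta L_j$ spreading mod $2\pi$ so that $D(\mu)$ behaves like a generic unitary diagonal) requires $|\theta|\gtrsim 1/(L\log k)$: for smaller $\theta$ all products $\theta L_j$ are $o(1)$, the phases cluster at $0$, and there is no anti-concentration to exploit. Hence the intermediate sector, roughly $1/(L\log^{2+\alpha}k)\lesssim|\arg\mu|\lesssim 1/(L\log k)$, is excluded by neither argument, and that is exactly where the theorem has to work hardest.

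The paper bridges this window with two ingredients your plan lacks. First, Lemma \ref{lem:parallel_1} bounds $\|x^{\perp}\|^2\le\delta_k\|x^{\parallel}\|^2$ for \emph{every} $\mu$ in the forbidden region, using only $|\mu^{L_i}|^2\ge 1-\epsilon_k$ with $\epsilon_k=4M\log^{-(\gamma-1)}k\ll\Delta_k$: what matters is that $D(\mu)$ is a near-isometry in modulus, not that it is close to $I$, so $\|D(\mu)-I\|$ is never compared to $\Delta_k$. Second, in the whole small-angle sector $|\arg\mu|\le \pi/(M_\varphi L\log k)$ only the projection on $\mathbf{1}$ is used: linearizing $1-\mu^{L_i}\approx L_i(1-\mu)$ on both sides of the resulting identity makes the unknown factor $|1-\mu|$ cancel, leaving the $\mu$-free empirical-moment criterion $\bigl(\frac1k\sum_i L_i\bigr)^2\le 12\,\delta_k\,\frac1k\sum_i L_i^2$ (Proposition \ref{prop:small_angles->L_i}), which Chebyshev concentration (Lemma \ref{lm:L Lsq concentration}) rules out a.a.s.; this cancellation is the missing idea. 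A secondary point: your large-angle program (uniform least-singular-value control of $D(\mu)-AS$ over an $\varepsilon$-net) is much heavier than needed and left unproved, whereas the paper uses the same $\delta_k$ bound to reduce everything to the scalar $P(\mu)=\frac1k\sum_i\mu^{L_i}$ (Lemma \ref{lem:Pmu-near-1}) and imports the uniform bound $\Re P(\mu)\le 1-\log^{-\eta}k$ from \cite{gerencser2019improved}, which is what ultimately yields the exponent $7+\alpha$.
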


The partial results we develop for proving Theorem \ref{thm:indep} for the \arcmod model will allow deducing, with some additional analysis, an analogous result for the \cyclemod model.

\begin{theorem}
  \label{thm:main}
  Assume $n,k \rightarrow \infty$ in parallel while $\rho_l < \log n / \log k < \rho_u$ for some constants $1<\rho_l<\rho_u<\infty$, and a matrix series $A_k\in\cA_{c,\alpha}$ is given for some $c,\alpha$. For any constant $\gamma > 7 + \alpha$ the spectral gap of the random transition probability matrix $H_n(A_k)$ can be a.a.s.\ bounded as follows:
  $$
  \lambda(\bar A) \ge \frac{k}{n\log^\gamma k},
  $$
\end{theorem}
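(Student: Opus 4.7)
The plan is to derive Theorem \ref{thm:main} from Theorem \ref{thm:indep} through a conditioning argument. The arc lengths between removed edges in the \cyclemod form a uniformly random composition of $n$ into $k$ positive parts. A classical computation shows that the joint law of $k$ i.i.d.\ $\text{Geo}(1/L)$ variables conditioned on their sum equaling $n$ is exactly this uniform composition, independently of $L$. Consequently, setting $L := \lfloor n/k \rfloor$ and invoking the correspondence announced in Lemma \ref{lem:modelconditioning}, we expect the distributional identity
\[
H_n(A_k) \stackrel{d}{=} \bigl(G_L(A_k) \mid \Sigma\bigr), \qquad \Sigma := \Bigl\{\sum_{i=1}^k L_i = n\Bigr\}.
\]
The assumption $\log n/\log k \in (\rho_l,\rho_u)$ with $\rho_l>1$ yields $\log L/\log k \in (\rho_l-1,\rho_u-1) \subset (0,\infty)$, so Theorem \ref{thm:indep} applies to $G_L(A_k)$ and provides $\lambda(G_L(A_k)) \geq 1/(L\log^\gamma k)$ a.a.s., which equals $\Omega(k/(n\log^\gamma k))$.

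To transfer the bound to the conditional model I would use the elementary inequality
\[
\mathbb{P}\bigl(\lambda(G_L(A_k)) < 1/(L\log^\gamma k) \mid \Sigma\bigr) \leq \frac{\mathbb{P}(\lambda(G_L(A_k)) < 1/(L\log^\gamma k))}{\mathbb{P}(\Sigma)}.
\]
A local central limit theorem for sums of i.i.d.\ geometric variables gives $\mathbb{P}(\Sigma) = \Theta(1/(\sqrt{k}\,L)) = \Theta(k^{1/2}/n)$, which is polynomially small in $k$ with exponent at most $\rho_u - 1/2$. The transfer therefore succeeds provided the failure probability in Theorem \ref{thm:indep} is polynomially small in $k$ with exponent strictly larger than $\rho_u - 1/2$.

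The main obstacle is thus extracting such a quantitative failure rate from the proof of Theorem \ref{thm:indep}. That proof separates the analysis of eigenvalues into small-argument and large-argument regimes (Sections \ref{sec:small_angles} and \ref{sec:large_angles}), each relying on concentration estimates for functionals of the $L_i$'s (e.g.\ the maximum arc length or their sum of squares) and on spectral estimates built from $A_k \in \cA_{c,\alpha}$. These typically yield polynomial decay in $k$, and by slightly inflating the exponent $\gamma$ one can trade excess logarithmic slack for a stronger polynomial rate exceeding $\rho_u - 1/2$. One should also verify that the event $\Sigma$ is compatible with the regularity properties of the $L_i$'s used in the proof of Theorem \ref{thm:indep}, which is automatic since $\Sigma$ pins the sum at its mean and leaves the typical max/variance behavior unchanged. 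Combining the quantitative refinement with the conditioning inequality yields $\mathbb{P}(\lambda(H_n(A_k)) < k/(n\log^\gamma k)) \to 0$, which is the claimed a.a.s.\ bound.
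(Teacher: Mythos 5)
Your overall skeleton (identify $H_n(A_k)$ with $G_L(A_k)$ conditioned on $\Sigma=\{\sum_i L_i=n\}$, then transfer high-probability statements through the conditioning) is indeed the paper's starting point, and the crude inequality $\PP(\text{fail}\mid\Sigma)\le \PP(\text{fail})/\PP(\Sigma)$ is exactly how the paper handles \emph{part} of the argument: the event $\BL$ on the maximal arc length and the large-angle analysis have unconditional failure probabilities $2k^{1-M}$ and $e^{-\Omega(k/\log^\theta k)}+k^{1-M}$, which can be made $o(1/n)\le o(\PP(\Sigma))$ by taking $M>\rho_u+1$, since $M$ is a free parameter. The genuine gap is in your treatment of the small-angle regime. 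There the proof of Theorem \ref{thm:indep} rests on the Chebyshev bounds of Lemma \ref{lm:L Lsq concentration}, whose failure probability is only $O(1/k)$, and this rate is \emph{not} improvable by "inflating $\gamma$": the exponent $\gamma$ only enters the definition of the forbidden region and the quantities $\epsilon_k,\delta_k$, i.e.\ logarithmic factors, and has no effect whatsoever on how well the empirical moments $\frac1k\sum_i L_i$ and $\frac1k\sum_i L_i^2$ concentrate. With $\PP(\Sigma)=\Theta(\sqrt{k}/n)$ and $n\approx k^{\rho}$, the ratio $O(1/k)/\PP(\Sigma)\approx k^{\rho-3/2}$ diverges as soon as $\rho>3/2$, so your transfer mechanism breaks down precisely in the regime the theorem is supposed to cover. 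Your remark that compatibility of $\Sigma$ with the regularity of the $(L_i)$ is "automatic since $\Sigma$ pins the sum at its mean" is exactly the assertion that needs proof, not a triviality: conditioning on the sum introduces negative dependence among the $L_i$ and could in principle distort the law of $\sum_i L_i^2$.

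The paper closes this gap by a different, conditioning-specific argument rather than by strengthening Theorem \ref{thm:indep}. First, on $\Sigma$ the first empirical moment is pinned exactly, so the deterministic criterion of Proposition \ref{prop:small_angles->L_i} reduces to the single event $\{\sum_i L_i^2\ge \frac{k}{12\delta_k}L^2\}$. Then, instead of dividing by $\PP(\Sigma)$, the paper shows via a monotone coupling (Lemma \ref{lem:increase_cond_proba}) that $m\mapsto\PP(\sum_i L_i^2\ge K\mid \sum_i L_i=m)$ is nondecreasing, and via a Berry--Esseen estimate (Lemma \ref{lem:exceed_avg_04}) that $\PP(\sum_i L_i\ge n)\ge 0.4$; combining the two gives $\PP(\sum_i L_i^2>4kL^2\mid\Sigma)\le \frac{5/k}{0.4}$, i.e.\ the conditional probability is controlled by a constant multiple of the unconditional Chebyshev bound, independently of how small $\PP(\Sigma)$ is. To repair your proposal you would either have to reproduce this kind of argument, or replace Lemma \ref{lm:L Lsq concentration} by genuinely stronger concentration for $\frac1k\sum_i L_i^2$ (e.g.\ truncation at $ML\log k$ under $\BL$ plus a Bernstein-type bound) so that the unconditional failure probability becomes $o(\sqrt{k}/n)$ for every $\rho<\rho_u$; as written, the key quantitative step is missing.
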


These rates are comparable, up to logarithmic factors, to those obtained in \cite{gerencser2019improved}, which was restricted to the fully connected $A=\frac{1}{k}\1\1^T$. It represents a significant improvement with respect 
to the symmetrized version, where it is easy to see that the spectral gap is $O(k^2/n^2)$, showing the very positive impact that asymmetry can have on mixing properties.
The next three sections are devoted to proving these two theorems.

\section{Low-Dimensional Representation for \Arcmod}\label{sec:lowdimensional}

\subsection{Modified Eigenvalue Problem}

On the way to find the eigenvalues - eigenvectors of the \arcmod $G_L(A)$ described in Definition \ref{def:arcmod}, our first step will be to transform $G_L(A) y = \mu y$ into a condensed equivalent form involving only $A$ and one variable per arc. 

\begin{proposition}
The eigenvalues $\mu$ of \arcmod $G_L(A)$ are exactly the values for which the following equation admits a non-trivial solution $x$
    \begin{equation}\label{eq:eig_main}
 CAx = \Dm x,
\end{equation}
where $\Dm$ is the diagonal matrix with $D_{ii} = \mu^{L_i}$ and $C$ is a circular permutation matrix with $C_{i,i-1}=1$ for all $i$ (with $k+1\equiv 1$).
\end{proposition}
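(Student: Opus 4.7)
The plan is to reduce the eigenvalue equation $G_L(A) y = \mu y$, posed on the graph with $\sum_i L_i$ nodes, to a $k$-dimensional system by exploiting the deterministic chain structure inside each arc. First, I would fix a candidate eigenvalue $\mu$ with right eigenvector $y$. For any node $(i, \ell)$ with $\ell < L_i$, the only outgoing transition goes to $(i, \ell+1)$ with weight $1$, so the eigenvector equation immediately gives $y_{(i, \ell+1)} = \mu \, y_{(i, \ell)}$. Iterating along arc $i$ yields $y_{(i, \ell)} = \mu^{\ell-1} x_i$ for all $\ell \le L_i$, where $x_i := y_{(i, 1)}$; in particular, nontrivial eigenvectors $y$ correspond exactly to nontrivial vectors $x \in \R^k$.

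Next, I would apply the eigenvector equation at each endpoint $(j, L_j)$, whose outgoing transitions go to $(i+1, 1)$ with weight $A_{ij}$. Substituting $y_{(i+1,1)} = x_{i+1}$ and $y_{(j, L_j)} = \mu^{L_j-1} x_j$ yields the scalar system
\[
\sum_i A_{ij}\, x_{i+1} \;=\; \mu^{L_j}\, x_j, \qquad j=1,\dots,k.
\]
Conversely, given any $(\mu, x)$ with $x \neq 0$ satisfying this system, defining $y_{(i,\ell)} := \mu^{\ell-1} x_i$ produces an eigenvector of $G_L(A)$ with eigenvalue $\mu$ (verified directly at interior and endpoint nodes). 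Hence $\mu$ is an eigenvalue of $G_L(A)$ if and only if this $k$-dimensional system admits a nontrivial solution $x$.

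To put the system in matrix form, I would introduce $M$ with $M_{j,m} := A_{m-1, j}$ so the system reads $Mx = D(\mu) x$. Direct inspection gives $M = A C^T$, and the symmetry $A = A^T$ yields $M = (CA)^T$; thus the condensed system is $(CA)^T x = D(\mu) x$. Since $D(\mu)$ is diagonal and therefore self-transpose, $\det((CA)^T - D(\mu)) = \det(CA - D(\mu))$, so $(CA)^T x = D(\mu)x$ and $CA x = D(\mu) x$ admit a nontrivial solution for exactly the same set of values $\mu$, which proves the claim.

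The main obstacle I anticipate is the careful bookkeeping that reconciles the ``off-by-one'' coupling---arc $j$'s end connects to arc $(i+1)$'s start with weight $A_{ij}$---with the cyclic shift $C$ defined by $C_{i, i-1}=1$. The naive derivation delivers $(CA)^T$ rather than $CA$, so the symmetry of $A$ is used twice, first to identify the matrix appearing in the condensed system, and then, via the determinantal argument above, to conclude that transposition does not change the set of admissible eigenvalues.
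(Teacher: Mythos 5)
Your proposal is correct and follows essentially the same route as the paper: propagate the eigenvector deterministically along each arc and condense the eigenvalue problem to a $k$-dimensional system with one variable per arc. The only difference is a bookkeeping convention: the paper writes the balance equations at incoming edges (anchoring $x_i = y_{i,L_i}$) and so obtains $CAx = D(\mu)x$ directly, whereas your outgoing-edge convention yields $(CA)^T x = D(\mu)x$, which you correctly reconcile through the transpose/determinant observation that the two systems are singular for exactly the same values of $\mu$.
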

\begin{proof}
    Consider an eigenvalue $\mu$ and corresponding eigenvector $y$ for $G_L(A)$ thus satisfying $G_L(A) y = \mu y$. Starting from a node $(i,\ell)$ for $\ell >1$, since this node only has one incoming edge from $(i,\ell-1)$, the equation $G_L(A) y = \mu y$ implies $\mu y_{i,\ell} = y_{i,\ell-1}$. The value of all the $y_{i,\ell}$ is thus uniquely specified by $y_{i,L_i}$. Defining $x_i:=y_{i,L_i}$, we indeed have $y_{i,\ell} = x_i \mu^{L_i-\ell}$. In particular, $y_{i,1} = x_i \mu^{L_i-1}$.

    Let us now consider node $y_{i,1}$. This node has incoming edges with weights $A_{i-1,j}$ from all nodes $(j,L_j)$ for which $A_{i-1,j}\neq 0$. The eigenvalue equation yields thus 
$$\mu y_{i,1}   = \sum_j A_{i-1,j} y_{j,L_j}.$$
Taking into account the  relation $y_{i,1} = x_i \mu^{L_i-1}$, we have equivalently 
$$
\mu \cdot \mu^{L_i-1} x_i = \sum _j A_{i-1,j} x_j,  \hspace{.5cm} \forall i=1,\dots,k,
$$
which can be rewritten as
\begin{equation}\label{eq:eig_main}
 CAx = \Dm x.
\end{equation}
So any solution of $G_L(A)  y = \mu y$ yields a solution of \eqref{eq:eig_main}, and by expanding $x$ to $y$ it is easy to confirm that the converse is also true. 
\end{proof}

The sequel of the proof focuses thus on this condensed equation \eqref{eq:eig_main}. We will show the impossibility, w.h.p. of obtaining solution $x,\mu$ ($\mu\neq 1$) of \eqref{eq:eig_main} with $\mu\in \forbid$, defined as 
$$
\forbid := \{\mu\neq 1| \geq |\mu| \geq 1-\frac{1}{L\log^\gamma k}\}.
$$
We will in fact follow different approaches for $\mu$ with \quotes{small} or \quotes{large} arguments, and consider thus respectively in Section \ref{sec:small_angles} and Section \ref{sec:large_angles} the sets
\begin{align}
\smallangles &:= \{\mu\in \forbid, |\arg \mu|\leq  \frac{\pi}{M_\varphi L \log k}\},\\
\largeangles &:= \{\mu\in \forbid, |\arg \mu|>  \frac{\pi}{M_\varphi L \log k}\},
\end{align}
for a constant $M_\varphi$ to be chosen later, with obviously $\forbid = \smallangles \cup \largeangles$.
Our analysis will be valid under the assumption that the arcs are not exceptionally long which is characterized by the event
\providecommand{\BL}{S(M)} 
\begin{equation}\label{eq:def_BL}
\BL:= \{\max_i L_i \leq ML\log k\},
\end{equation}
for a constant $M>1$ to be chosen later.

Before going further, we need certain technical lemmas establishing that $\BL$ holds with high probability and deriving approximations related to $\mu^{L_i}$ that are valid in that case.

\subsection{High Probability Approximations}

We first re-state a lemma from \cite{gerencser2019improved}, characterizing the probability of having no exceptionally long arc length in the \arcmod model.
\begin{lemma}\label{lem:bound_Li}
Consider the event $\BL$ defined in \eqref{eq:def_BL} with $M>1$.
This event $\BL$ occurs asymptotically almost surely, as its probability satisfies
$$
P(\BL) \geq 1 - 2k^{1-M}.
$$
\end{lemma}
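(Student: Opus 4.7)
The plan is a straightforward tail-bound argument: first control the probability that a single arc is too long, then take a union bound over the $k$ independent arcs.

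For the single-arc step, I would use the explicit tail of the geometric distribution. With $L_i \sim \text{Geo}(1/L)$ supported on $\{1,2,\dots\}$, we have $P(L_i > t) = (1-1/L)^{t}$ for every nonnegative integer $t$. Applying the elementary inequality $\ln(1-1/L) \le -1/L$ (valid for $L \ge 2$), this gives
\[
P(L_i > t) \le e^{-t/L}.
\]
Setting $t = \lfloor ML\log k\rfloor$, this becomes at most $e^{-(ML\log k - 1)/L} = e^{1/L} \cdot k^{-M}$. For $L$ sufficiently large (which we may assume since $L \to \infty$ by hypothesis elsewhere, but one can also be quantitative here), the prefactor $e^{1/L}$ is bounded by $2$, so
\[
P(L_i > ML\log k) \le 2 k^{-M}.
\]

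For the union-bound step, since the $L_i$ are independent and identically distributed by Definition \ref{def:arcmod},
\[
P(\BL^c) = P\bigl(\max_i L_i > ML\log k\bigr) \le \sum_{i=1}^k P(L_i > ML\log k) \le k \cdot 2k^{-M} = 2k^{1-M},
\]
which is the announced bound. Since $M>1$, this tends to $0$ as $k\to\infty$, giving the a.a.s.\ conclusion.

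The only mild subtlety is keeping the constant clean: one has to absorb both the rounding in $\lfloor ML\log k\rfloor$ and the slack in $\ln(1-1/L) \le -1/L$ into the factor $2$. This is routine and does not require any probabilistic input beyond the geometric tail formula and a union bound; no independence beyond pairwise is used, and no structure of $A$ or of the chain enters at this stage.
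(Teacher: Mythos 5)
Your proposal is correct and follows essentially the same route as the paper: the exact geometric tail, the bound $1-1/L\le e^{-1/L}$ (the paper uses the equivalent $(1-1/L)^L<1/e$), a factor $2$ to absorb the integer rounding (the paper uses $(1-1/L)^{-1}<2$, you use $e^{1/L}\le 2$, both harmless for the relevant range of $L$), and a union bound over the $k$ arcs. No gap to report.
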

\begin{proof}
  Assume $k\ge 2$. Remember that $(L_i)$ are i.i.d.\ variables with law
  $Geo(1/L)$, thus for any of them we have that
  $$P(L_i \ge ML \log k) = P(L_i \ge \lceil ML \log k \rceil)
  = \left(1-\frac{1}{L}\right)^{\lceil ML \log k\rceil-1}
  \le 2\left(1-\frac{1}{L}\right)^{ML \log k},$$  
  based on $\left(1-\frac{1}{L}\right)^{\lceil ML \log k\rceil}
  \le \left(1-\frac{1}{L}\right)^{ML \log k}$ and
  $\left(1-\frac{1}{L}\right)^{-1}<2$.
  Knowing $(1-1/L)^L < 1/e$ we get
  $$P(L_i \ge ML \log k) \le 2e^{-M\log k} = 2k^{-M}.$$
  To treat all $(L_i)$ together for $1\le i \le k$, we use a simple union bound.
  $$P(\max_i L_i \ge ML \log k) \le 2k^{1-M}.$$
\end{proof}

We now provide some linear approximations of quantities involving $\mu^{L_i}$ that are valid in the set $\forbid$ (and specifically $\smallangles$), where we aim to establish the absence of eigenvalues.

\begin{lemma}
  \label{lem:muL_abs}
If $\mu \in \forbid$, then for $k$ large enough under event $\BL$ defined in \eqref{eq:def_BL} there holds
\begin{equation}
  \label{eq:abssq_bound}
  \abs{\mu^{L_i}}^2\geq 1-4M\log^{-(\gamma-1)} k =: 1-\epsilon_k.
\end{equation}
\end{lemma}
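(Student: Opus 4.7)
The plan is to use the two hypotheses directly, which act in complementary directions on $|\mu^{L_i}|^2 = |\mu|^{2L_i}$. Membership in $\forbid$ gives the lower bound $|\mu| \geq 1 - \frac{1}{L\log^\gamma k}$, while the event $\BL$ gives the upper bound $L_i \leq ML\log k$. Combining them I would first write
$$|\mu^{L_i}|^2 = |\mu|^{2L_i} \geq \left(1 - \frac{1}{L\log^\gamma k}\right)^{2L_i}.$$

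The key tool is then Bernoulli's inequality $(1-x)^n \geq 1 - nx$, applicable because $2L_i$ is a positive integer and, for $k$ large enough, $\frac{1}{L\log^\gamma k} \in [0,1]$. Applying it gives a bound of the form $1 - \frac{2L_i}{L\log^\gamma k}$, and inserting the arc-length bound from $\BL$ finally yields $1 - \frac{2M}{\log^{\gamma-1} k}$. Since $2M \leq 4M$, the conclusion of the lemma follows immediately, and the clause ``for $k$ large enough'' serves only to ensure $\frac{1}{L\log^\gamma k} \leq 1$, which is automatic in the asymptotic regime $k, L \to \infty$ required by Theorem \ref{thm:indep}.

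I do not anticipate any real obstacle here: this is a routine one-step estimate. The gap between the $2M$ produced by Bernoulli and the $4M$ stated in the lemma is gratuitous slack, presumably kept to avoid tracking a sharper constant or to absorb lower-order terms from a $\log(1-x) = -x - x^2/2 - \dots$ expansion should a finer analysis ever be substituted. The lemma's role is simply to freeze a clean linear-in-$M$ bound on the deviation of $|\mu^{L_i}|^2$ from $1$, which will be used repeatedly in the small-argument analysis of Section \ref{sec:small_angles}.
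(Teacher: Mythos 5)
Your proof is correct, and it uses the same two ingredients in the same way as the paper: the lower bound $|\mu|\ge 1-\frac{1}{L\log^\gamma k}$ from membership in $\forbid$ and the upper bound $L_i\le ML\log k$ from the event $S(M)$, followed by a one-line linearization of $\bigl(1-\frac{1}{L\log^\gamma k}\bigr)^{2L_i}$. The only difference is the elementary inequality used in that last step: the paper passes through the exponential, bounding $\bigl(1-\frac{1}{L\log^\gamma k}\bigr)^{2ML\log k}>\exp\bigl(-4M\log^{-(\gamma-1)}k\bigr)>1-4M\log^{-(\gamma-1)}k$, where the first of these inequalities is the reason for the ``$k$ large enough'' clause (one needs $\log(1-x)\ge -2x$, i.e.\ $x$ small), while you invoke Bernoulli's inequality directly, which holds outright once $\frac{1}{L\log^\gamma k}\le 1$ and yields the sharper constant $2M$. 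So your diagnosis is almost right but not quite: the paper's $4M$ is not arbitrary slack kept for convenience; it is exactly the factor $2$ lost in the comparison $\log(1-x)\ge -2x$ along the exponential route. Your route is marginally more elementary and gives a slightly better constant, but since only the order $\log^{-(\gamma-1)}k$ of $\epsilon_k$ matters downstream (in Lemmas \ref{lem:linear_small_angles} and \ref{lem:parallel_1}), the two arguments are interchangeable.
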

\begin{proof}
  By the definition of $\forbid$ we have
  $$
  \abs{\mu^{L_i}}^2 \geq \abs{1-\frac{1}{L\log^\gamma k}}^{2ML\log k} > \exp(-4M\log^{-(\gamma-1)} k) > 1-4M\log^{-(\gamma-1)} k.
  $$
  Here we simply use the linear approximation of the exponential function near 0, in particular $k$ has to be large enough for the middle inequality.
\end{proof}

\begin{lemma}\label{lem:linear_small_angles}
If $\mu \in \smallangles$, with $M_\varphi \geq 3\pi M$ then for $k$ large enough under event $\BL$ there holds
\begin{equation}
  \label{eq:linear_small_angles}
  \left|(1-\mu^{L_i}) - L_i(1-\mu) \right| \leq \frac 1 2 L_i|1-\mu|.  
\end{equation}
Furthermore this leads to
\begin{align}
  \left|\sum_i(1-\mu^{L_i})\right| &\ge \frac 1 2 |1-\mu| \left(\sum_i L_i\right), \label{eq:linear_sum}\\
  \sum_i\left|1-\mu^{L_i}\right|^2 &\le 3 |1-\mu|^2 \left(\sum_i L_i^2\right).\label{eq:linear_squares}
\end{align}
\end{lemma}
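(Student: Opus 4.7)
The key identity is the geometric factorization
\[
1-\mu^{L_i} \;=\; (1-\mu)\sum_{j=0}^{L_i-1}\mu^j,
\]
from which
\[
(1-\mu^{L_i}) - L_i(1-\mu) \;=\; -(1-\mu)\sum_{j=0}^{L_i-1}(1-\mu^j),
\]
and iterating once more gives $|1-\mu^j|\le j|1-\mu|$ since $|\mu|\le 1$ on $\forbid$. Combining,
\[
\left|(1-\mu^{L_i})-L_i(1-\mu)\right|\;\le\;|1-\mu|^2\,\frac{L_i(L_i-1)}{2}.
\]
Thus \eqref{eq:linear_small_angles} reduces to showing $(L_i-1)|1-\mu|\le 1$ under the given assumptions.

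To bound $|1-\mu|$ on $\smallangles$, write $\mu=re^{i\varphi}$. The triangle inequality yields
\[
|1-\mu|\;\le\;(1-r)+|1-e^{i\varphi}|\;\le\;\frac{1}{L\log^\gamma k}+|\varphi|\;\le\;\frac{1}{L\log^\gamma k}+\frac{\pi}{M_\varphi L\log k}.
\]
Under $\BL$ we have $L_i-1\le ML\log k$, so
\[
(L_i-1)|1-\mu|\;\le\;\frac{M}{\log^{\gamma-1}k}+\frac{\pi M}{M_\varphi}\;\le\;\frac{M}{\log^{\gamma-1}k}+\frac{1}{3},
\]
where the last step uses the hypothesis $M_\varphi\ge 3\pi M$. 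For $k$ large enough, the first term is at most $1/6$, so the total is at most $1/2\le 1$, giving \eqref{eq:linear_small_angles}.

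For \eqref{eq:linear_sum}, write $1-\mu^{L_i}=L_i(1-\mu)+\varepsilon_i$ with $|\varepsilon_i|\le \tfrac12 L_i|1-\mu|$ from the previous step; then by the reverse triangle inequality,
\[
\left|\sum_i(1-\mu^{L_i})\right|\;\ge\;|1-\mu|\sum_i L_i-\sum_i|\varepsilon_i|\;\ge\;\tfrac12|1-\mu|\sum_i L_i.
\]
For \eqref{eq:linear_squares}, the same decomposition gives $|1-\mu^{L_i}|\le \tfrac32 L_i|1-\mu|$, hence $|1-\mu^{L_i}|^2\le \tfrac94 L_i^2|1-\mu|^2$, and summing yields the claim (with constant $3>9/4$).

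The only delicate step is the first one: getting the factor $\pi M/M_\varphi$ small enough demands using precisely the hypothesis $M_\varphi\ge 3\pi M$, and one must verify that the contribution of the modulus deficit $(1-r)$, which is controlled by $\forbid$ and not by $\smallangles$, is negligible for $k$ large; this is where ``$k$ large enough'' enters.
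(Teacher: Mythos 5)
Your proof is correct, and for the two derived inequalities \eqref{eq:linear_sum} and \eqref{eq:linear_squares} it coincides with the paper's argument. For the main estimate \eqref{eq:linear_small_angles} the core is the same — linearize via $1-\mu^{L_i}=(1-\mu)\sum_{j=0}^{L_i-1}\mu^j$ and control the error using the modulus constraint from $\forbid$, the angle constraint from $\smallangles$ with $M_\varphi\ge 3\pi M$, and $L_i\le ML\log k$ under $\BL$ — but your error control takes a slightly different route: you iterate the factorization to get the quadratic bound $\frac{L_i(L_i-1)}{2}|1-\mu|^2$ and then reduce the claim to $(L_i-1)|1-\mu|\le 1$, bounding $|1-\mu|\le(1-|\mu|)+|\arg\mu|$ directly from the definitions of $\forbid$ and $\smallangles$. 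The paper instead bounds each summand, $|\mu^j-1|\le(1-|\mu^j|)+|\arg\mu^j|\le\frac16+\frac13$, where the modulus part is obtained through Lemma \ref{lem:muL_abs}. The numerology is identical (the $1/3$ from $\pi M/M_\varphi$, the $M\log^{-(\gamma-1)}k\le 1/6$ for large $k$), but your version is marginally more self-contained since it does not invoke Lemma \ref{lem:muL_abs}, at the cost of the extra (harmless) quadratic step.
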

\begin{proof}
For the main inequality we rearrange the left hand side to write
\begin{equation}\label{eq:first_eq_lemma_linmu}
|(1-\mu^{L_i}) - L_i(1-\mu) | = \left|\frac{1-\mu^{L_i}}{1-\mu} - L_i\right| \cdot |1-\mu| = \left|\sum_{j=0}^{L_i-1}\mu^j - L_i\right| \cdot |1-\mu|.
\end{equation}
Note that $|\mu^j-1| \le 1 - |\mu^j| + |\arg \mu^j|$ by the triangle inequality. Moreover, it follows from the definition of $\smallangles$ that
\begin{equation}\label{eq:bound_arg}
|\arg \mu^j| \leq j \abs{\arg \mu} \leq j\frac{\pi}{M_{\varphi}L\log k} \leq \frac{M\pi}{M_\varphi}\leq \frac{1}{3},
\end{equation}
where we have used $M_\varphi \geq 3\pi M$ and $j\leq L_i\leq ML \log k$ under $\BL$.
On the other hand, using Lemma \ref{lem:muL_abs}
$$
1 - |\mu^j|\leq 1-{\mu^{L_i}} =1 - \sqrt{\abs{\mu^{L_i}}^2} \leq 1 - \sqrt{1-4M\log ^{-(\gamma-1)}k} \leq M  \log ^{-(\gamma-1)}k\leq \frac{1}{6}
$$
for $k$ large enough. Combining this with \eqref{eq:bound_arg}, we obtain
$
|\mu^j-1| \le 1 - |\mu^j| + |\arg \mu^j| \leq \frac{1}{2}
$
for $k$ large enough, which together with \eqref{eq:first_eq_lemma_linmu} yields the claim \eqref{eq:linear_small_angles}.

Let us turn to the second inequality and introduce $\nu_i = (1-\mu^{L_i})-L_i(1-\mu)$ for the approximation error of this linearization, for which we have the bound above. Then we have
\begin{align*}
  \left| \sum_i (1-\mu^{L_i})\right| &= \left|\sum_i L_i(1-\mu) + \sum_i\nu_i\right|\\
                                     &\ge \left|\sum_i L_i(1-\mu)\right| - \left|\sum_i\nu_i\right|\\
                                     &\ge |1-\mu| \sum_i L_i - \frac 1 2 \sum_i L_i |1-\mu|\\
                                     &=\frac 1 2|1-\mu| \left(\sum_i L_i\right) .
\end{align*}

Considering the third inequality we can write
\begin{align*}
  \sum_i\left|1-\mu^{L_i}\right|^2 &= \sum_i\left| L_i(1-\mu) + \nu_i\right|^2\\
                                   &\le \sum_i\left(|L_i(1-\mu)| + |\nu_i|\right)^2\\
                                   &\le \sum_i\left(|L_i(1-\mu)| + \frac 1 2 L_i|(1-\mu)|\right)^2\\
                                   &\le 3|1-\mu|^2 \left(\sum_i L_i^2\right).
\end{align*}
\end{proof}

\subsection{Approximate Eigenvector Decomposition}

The next lemma shows that any hypothetical solution $x$ to the modified eigenvalue equation
\eqref{eq:eig_main} with $\mu \in \forbid$ would be asymptotically
parallel to the vector $\1$. To lighten the notation, $\norm{.}$ will designate the 2-norm. Other norms will be explicitly specified. 

\providecommand{\xpar}{x^{\parallel}}
\providecommand{\xperp}{x^{\perp}}

\begin{lemma}\label{lem:parallel_1}
Let $x$ be a non-zero solution of \eqref{eq:eig_main} with $\mu\in \forbid$. 
Let $\xpar = (\frac{\1^Tx}{k})\1$ and $\xperp = x-\xpar$. Then under event $\BL$ there holds
\begin{equation}\label{eq:xperp<xpar}
  \norm{\xperp}^2  \leq \frac{\epsilon_k}{2\Delta_k-\Delta_k^2-\epsilon_k} \norm{\xpar}^2
  =:\delta_k \norm{\xpar}^2,
\end{equation}
where recall that $\epsilon_k=4M\log^{-(\gamma-1)} k$ is defined in \eqref{eq:abssq_bound} and $\Delta_k=c\log^{-\alpha} k$ is the lower bound for the spectral gap of $A$ (see Definition \ref{def:class_A}). Moreover, if $\gamma> \alpha + 1$, then  $\lim_{k\to \infty} \delta_k=0$
\end{lemma}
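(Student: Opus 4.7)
The natural strategy is to compare the norms of both sides of the modified eigenvalue equation $CAx=D(\mu)x$ after decomposing $x=\xpar+\xperp$ along and orthogonal to $\1$. Since $C$ is a permutation matrix, it is unitary and thus $\|CAx\|=\|Ax\|$, so squaring norms gives $\|Ax\|^2=\|D(\mu)x\|^2=\sum_i|\mu^{L_i}|^2|x_i|^2$. I will then bound the right-hand side from below using Lemma \ref{lem:muL_abs}, which yields
\[
\|D(\mu)x\|^2 \ge (1-\epsilon_k)\|x\|^2 = (1-\epsilon_k)\bigl(\|\xpar\|^2+\|\xperp\|^2\bigr),
\]
by Pythagoras (since $\xpar\perp\xperp$).

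The key complementary ingredient is an upper bound on $\|Ax\|^2$ using the spectral properties of $A$. Because $A$ is doubly stochastic, $A\xpar=\xpar$, and because $A^T\1=\1$, we have $\1^T(A\xperp)=\1^T\xperp=0$, so the decomposition $Ax=A\xpar+A\xperp$ is itself orthogonal. Hence $\|Ax\|^2=\|\xpar\|^2+\|A\xperp\|^2$. Since $A$ is symmetric with $\lambda(A)\ge\Delta_k$ and $\xperp$ lies in the orthogonal complement of $\1$ (spanned by eigenvectors with eigenvalues of modulus at most $1-\Delta_k$), we get $\|A\xperp\|^2\le(1-\Delta_k)^2\|\xperp\|^2$, and altogether
\[
(1-\epsilon_k)\|\xpar\|^2+(1-\epsilon_k)\|\xperp\|^2 \le \|\xpar\|^2+(1-\Delta_k)^2\|\xperp\|^2.
\]
Rearranging this inequality, the $\|\xpar\|^2$ terms yield a coefficient $-\epsilon_k$ on the left and the $\|\xperp\|^2$ terms yield $(1-\Delta_k)^2-(1-\epsilon_k)=-2\Delta_k+\Delta_k^2+\epsilon_k$ on the right, giving exactly
\[
\|\xperp\|^2 \le \frac{\epsilon_k}{2\Delta_k-\Delta_k^2-\epsilon_k}\,\|\xpar\|^2 = \delta_k\|\xpar\|^2,
\]
which is \eqref{eq:xperp<xpar}.

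For the limit statement, I will just observe that $\Delta_k=c\log^{-\alpha}k$ and $\epsilon_k=4M\log^{-(\gamma-1)}k$, so under $\gamma>\alpha+1$ we have $\epsilon_k=o(\Delta_k)$, which ensures both positivity of the denominator $2\Delta_k-\Delta_k^2-\epsilon_k$ for large $k$ and $\delta_k\to 0$.

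\textbf{Anticipated obstacle.} The argument is mostly bookkeeping, and the only delicate point is making sure the orthogonal decomposition is preserved by $A$; this needs the double stochasticity of $A$ (via $A^T\1=\1$), not merely stochasticity. Everything else follows from applying the two-sided norm bound to the modified eigenvalue equation, and the ``small perturbation'' flavor of the final inequality comes from the fact that $\epsilon_k$ decays faster than $\Delta_k$ whenever $\gamma>\alpha+1$.
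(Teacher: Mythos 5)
Your proposal is correct and follows essentially the same route as the paper's proof: equate $\|D(\mu)x\|^2=\|Ax\|^2$ via the unitarity of $C$, lower-bound the left side by $(1-\epsilon_k)\bigl(\|\xpar\|^2+\|\xperp\|^2\bigr)$ using Lemma \ref{lem:muL_abs} under $\BL$, upper-bound the right side by $\|\xpar\|^2+(1-\Delta_k)^2\|\xperp\|^2$ using the orthogonal decomposition preserved by the doubly stochastic symmetric $A$, and rearrange; the concluding observation that $\epsilon_k=o(\Delta_k)$ when $\gamma>\alpha+1$ matches the paper's treatment of the limit. Your extra remark that $A^T\1=\1$ is what keeps $A\xperp$ orthogonal to $\xpar$ is a correct and slightly more explicit justification of a step the paper states without comment.
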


Observe that the denominator of the right hand part of \eqref{eq:xperp<xpar} is bounded from below and converges to a constant when $k$ grows. In the meantime the numerator converges to 0, so that $\delta_k$ converges to 0 a.a.s. (due to the event $\BL$) at a rate asymptotically comparable to that of $\epsilon_k$.
\begin{proof}
Taking the square norm on both sides of \eqref{eq:eig_main} leads to
\begin{equation}\label{eq:norm_equality}
\norm{\Dm x}^2 = \norm{CAx}^2 = \norm{Ax}^2 
\end{equation}
where we have use the fact that permuting entries with $C$ leaves the norm invariant. 
We begin by analyzing the right hand side. Since $A$ is doubly stochastic, observe that $A\xpar = \xpar$ is orthogonal to $A\xperp$. Hence there holds
\begin{align}
\norm{Ax}^2 & = \norm{A\xpar+A\xperp}^2 \nonumber\\
&= \norm{\xpar+A\xperp}^2\nonumber \\
&= \norm{\xpar}^2 + \norm{A\xperp}^2 \nonumber\\
&\leq \norm{\xpar}^2 + (1-\Delta_k)^2 \norm{\xperp}.
\label{eq:bound_nAx}
\end{align}
We now move to the left-hand part of \eqref{eq:norm_equality}. Using Lemma \ref{lem:muL_abs} and relying on $\BL$, we obtain
\begin{align}
\norm{\Dm x}^2&= \sum_i \abs{\mu^{L_i}}^2 |x_i^2|\nonumber\\
&\geq\sum_i  (1-\epsilon_k) |x_i^2|\nonumber \\
&= (1-\epsilon_k) \norm{x}^2\nonumber\\
&= (1-\epsilon_k)  \prt{\norm{\xpar}^2+\norm{\xperp}^2}. \label{eq:bound_nDx}
\end{align}
Reintroducing \eqref{eq:bound_nDx} and \eqref{eq:bound_nAx} in \eqref{eq:norm_equality} yields
$$
 (1-\epsilon_k)  \prt{\norm{\xpar}^2+\norm{\xperp}^2}  \leq  \norm{\xpar}^2 + (1-\Delta_k)^2 \norm{\xperp},
$$
and thus
$$
(1-\epsilon_k  -  (1-\Delta_k)^2  ) \norm{\xperp}^2  \leq  (1-(1-\epsilon_k)) \norm{\xpar}^2 ,
$$
which implies \eqref{eq:xperp<xpar}. The last part of the result follows from the definitions of $\Delta_k,\delta_k$ and $\epsilon_k$.
\end{proof}

As a consequence, under $\BL$ Lemma \ref{lem:parallel_1} implies that $\xpar\neq 0$, still for $\mu\in\forbid$.
Since solutions $x$ to \eqref{eq:eig_main} are defined up to a multiplicative constant, we can chose without loss of generality $\xpar = \1$ so that $\norm{\xpar}^2=k$. Equation \eqref{eq:xperp<xpar} becomes then
\begin{equation}\label{eq:xperp<dn}
\norm{\xperp}^2 \leq  k  \delta_k.
\end{equation}

\section{Small Angles}\label{sec:small_angles}

The main purpose of this section is to rule out non-trivial eigenvalues with large absolute value and small arguments a.a.s. More precisely, by the end we will prove the following:

\begin{proposition}
\label{prp:smallangle_main} 
Assume the conditions of Theorem \ref{thm:main}.
$H_n(A)$ has no eigenvalue in $\smallangles$ a.a.s.\ as $n,k\to \infty$.
\end{proposition}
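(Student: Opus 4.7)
The plan is to derive a single scalar identity by testing the modified eigenvalue equation \eqref{eq:eig_main} against $\1$, and then use Lemma \ref{lem:parallel_1} together with the linear approximations of Lemma \ref{lem:linear_small_angles} to reach a contradiction whenever $\mu\in\smallangles$. I will work first on the \arcmod $G_L(A)$ (for which all our preparatory lemmas are stated), and at the end transfer the conclusion to $H_n(A)$ via the conditioning relation alluded to in Lemma \ref{lem:modelconditioning}.

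First I would suppose for contradiction that $x$ is a non-trivial solution of \eqref{eq:eig_main} with $\mu\in\smallangles$, normalise so that $\xpar=\1$ (allowed under $\BL$ by Lemma \ref{lem:parallel_1}), so that $\norm{\xperp}^2\le k\delta_k$ and $\1^T\xperp=0$. Since $A$ is doubly stochastic and $C$ is a permutation, $\1^T CA=\1^T$. Hitting both sides of $CAx=\Dm x$ with $\1^T$ therefore gives $\1^T x = \sum_i \mu^{L_i}(1+x_i^\perp)$, which, after using $\sum_i x_i^\perp=0$, simplifies to
\begin{equation}\label{eq:my_identity}
\sum_i (1-\mu^{L_i}) \;=\; -\sum_i (1-\mu^{L_i})\, x_i^\perp.
\end{equation}
The right-hand side has been re-expressed this way (using once more $\sum_i x_i^\perp=0$) precisely so that the approximation $1-\mu^{L_i}\approx L_i(1-\mu)$ from Lemma \ref{lem:linear_small_angles} can be used on both sides.

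Now I would bound the two sides of \eqref{eq:my_identity}. The left side is lower-bounded by $\frac12|1-\mu|\sum_i L_i$ thanks to \eqref{eq:linear_sum}. The right side is upper-bounded by Cauchy–Schwarz together with \eqref{eq:linear_squares} and the norm estimate on $\xperp$:
\[
\Bigl|\sum_i(1-\mu^{L_i})x_i^\perp\Bigr|\;\le\;\Bigl(\sum_i|1-\mu^{L_i}|^2\Bigr)^{1/2}\norm{\xperp}
\;\le\;\sqrt{3}\,|1-\mu|\,\Bigl(\sum_i L_i^2\Bigr)^{1/2}\sqrt{k\delta_k}.
\]
Since $\mu\neq 1$ I may divide by $|1-\mu|$ and square, obtaining the purely geometric inequality
\begin{equation}\label{eq:contradiction_target}
\Bigl(\sum_i L_i\Bigr)^{2}\;\le\;12\,k\,\delta_k\,\sum_i L_i^{2}.
\end{equation}
At this point the random-eigenvalue question has been reduced to a clean statement about the arc-lengths alone: the existence of an eigenvalue in $\smallangles$ forces \eqref{eq:contradiction_target}, and I only need to show that \eqref{eq:contradiction_target} fails a.a.s.

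This is the main remaining step, but it is where the classical concentration for i.i.d. geometric variables is doing the work. Since $\E L_i = L$ and $\E L_i^2 = 2L^2-L$, Chebyshev (or a standard Chernoff-type bound for geometric sums) gives $\sum_i L_i\ge \tfrac12 kL$ and $\sum_i L_i^2\le 3kL^2$ a.a.s.\ in the joint regime $k,L\to\infty$ with $\rho_l<\log L/\log k<\rho_u$. Combined, these would force \eqref{eq:contradiction_target} to imply $\tfrac14 k^2L^2\le 36\,k^2L^2\,\delta_k$, i.e.\ $\delta_k\ge 1/144$; this contradicts Lemma \ref{lem:parallel_1}, which asserts $\delta_k\to 0$ under the hypothesis $\gamma>\alpha+1$ (automatically satisfied since $\gamma>7+\alpha$). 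The only subtlety here is that one must intersect with the good-arc-lengths event $\BL$ from Lemma \ref{lem:bound_Li}; but $\BL$ holds a.a.s., so the argument goes through.

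Finally, to reach the $H_n(A)$ statement I would invoke the correspondence of Lemma \ref{lem:modelconditioning}: the law of $H_n(A)$ is the conditional law of $G_L(A)$ given $\sum_i L_i=n$. Choosing $L:=n/k$, both $\BL$ and the concentration estimates above remain valid under this conditioning (the conditioning only pins down the sum while leaving room for the quadratic sum $\sum L_i^2$ to concentrate around $2kL^2$), so the contradiction argument runs verbatim and yields the proposition. The hardest part of the proof is the one I just sketched in the previous paragraph: upgrading the a.a.s.\ concentration of $\sum L_i$ and $\sum L_i^2$ from the unconditioned \arcmod model to the conditioned \cyclemod model, since this is where a naive union-bound over $\mu$ would fail and one has to use that the defining inequality \eqref{eq:contradiction_target} involves only a deterministic functional of the (conditioned) $(L_i)$.
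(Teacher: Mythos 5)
Your reduction to a deterministic arc-length criterion and your treatment of the \arcmod model are essentially the paper's own argument: testing \eqref{eq:eig_main} against $\1^T$, applying Cauchy--Schwarz together with Lemma \ref{lem:linear_small_angles} and Lemma \ref{lem:parallel_1} gives exactly the criterion of Proposition \ref{prop:small_angles->L_i} (your squared inequality $\bigl(\sum_i L_i\bigr)^2\le 12\,k\,\delta_k\sum_i L_i^2$ is \eqref{eq:smallangle L Lsq} multiplied by $k^2$), and your Chebyshev estimates are Lemma \ref{lm:L Lsq concentration}, which together give Proposition \ref{prop:imposs_small_angles_arclength} for $G_L(A)$. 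Up to that point the proposal matches the paper.

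The gap is in the final transfer to $H_n(A)$, which is the actual content of Proposition \ref{prp:smallangle_main}. You assert that the concentration estimates ``remain valid under this conditioning'' but give no argument, and the naive argument fails: by Lemma \ref{lem:modelconditioning} one must condition on $\{\sum_i L_i=n\}$, an event of probability only of order $1/n$ (the paper only uses the bound $\ge 1/n$), so dividing the unconditional bound $\PP\bigl(\sum_i L_i^2>4kL^2\bigr)\le 5/k$ by this probability yields a conditional bound of order $n/k$, which diverges because $\log n/\log k>\rho_l>1$. So the conditional concentration of $\sum_i L_i^2$ is genuinely not automatic, and this is precisely where the paper invests its remaining work: Lemma \ref{lem:exceed_avg_04} (a Berry--Esseen estimate giving $\PP(\sum_i L_i\ge n)\ge 0.4$) and Lemma \ref{lem:increase_cond_proba} (a coupling of the uniform conditional laws showing that $m\mapsto\PP(\sum_i L_i^2\ge K\mid\sum_i L_i=m)$ is nondecreasing) combine to give $\PP\bigl(\sum_i L_i^2>4kL^2\mid\sum_i L_i=n\bigr)\le 25/(2k)$; note that under the conditioning $\sum_i L_i=n$ holds exactly, so only the quadratic moment needs this treatment. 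A similar, easier point applies to the event $S(M)$ of Lemma \ref{lem:bound_Li}: its conditional failure probability is controlled by choosing $M>1+\rho_u$ so that $2k^{1-M}=o(1/n)$ and then dividing by $\PP(\sum_i L_i=n)\ge 1/n$, which your sketch also omits. Without these ingredients (or a substitute, e.g.\ a direct concentration bound for $\sum_i L_i^2$ under the uniform law on compositions of $n$ into $k$ positive parts), the step you yourself flag as the hardest remains an assertion, and the proof of the proposition for $H_n(A)$ is incomplete.
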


The analogous claim for the conditions of Theorem \ref{thm:indep} will be a convenient byproduct along the way.

\subsection{A Deterministic Criterion}\label{sec:aob}

We first show that solutions to \eqref{eq:eig_main} with $\mu \in \smallangles$ can only exist if a certain condition on the first and second empirical moments of $(L_i)$ is satisfied. Note that this condition is deterministic given the values of $(L_i)$.

\begin{proposition}\label{prop:small_angles->L_i}
 Assume the event $\BL$ holds for the collection $(L_i)$, then the existence of a solution $x,\mu$ to \eqref{eq:eig_main} with $\mu\in \smallangles$ implies 
 the following inequality:
    \begin{equation}
      \label{eq:smallangle L Lsq}
      \prt{\frac{1}{k}\sum_{i} L_i}^2 \leq 12 \delta_k \prt{\frac{1}{k}\sum_{i} L_i^2    }.
    \end{equation}
\end{proposition}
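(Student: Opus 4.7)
The plan is to extract a scalar identity from the eigenvalue equation by projecting onto $\1$, and then convert it into the desired moment inequality using the linearization and the near-parallelism of $x$ with $\1$ that have been established earlier.

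First I would left-multiply the modified eigenvalue equation \eqref{eq:eig_main}, namely $CAx = D(\mu) x$, by $\1^T$. Since $C$ is a permutation matrix and $A$ is doubly stochastic, we have $\1^T C = \1^T$ and $\1^T A = \1^T$, so the left-hand side collapses to $\1^T x$. This yields the scalar identity $\1^T x = \sum_i \mu^{L_i} x_i$. Using the decomposition $x = \xpar + \xperp$ from Lemma \ref{lem:parallel_1}, normalized so that $\xpar = \1$ (hence $\1^T \xperp = 0$ and $\norm{\xperp}^2 \leq k\delta_k$ by \eqref{eq:xperp<dn}), a short simplification gives
\begin{equation*}
\sum_i (1 - \mu^{L_i}) \;=\; \sum_i (\mu^{L_i} - 1)\, \xperp_i.
\end{equation*}

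Next, I would bound both sides. Taking absolute values and applying Cauchy--Schwarz to the right-hand side produces
\begin{equation*}
\left|\sum_i (1-\mu^{L_i})\right| \;\leq\; \sqrt{\sum_i |1-\mu^{L_i}|^2}\, \cdot \, \norm{\xperp}.
\end{equation*}
Now I would invoke Lemma \ref{lem:linear_small_angles} (applicable since $\mu \in \smallangles$ and $\BL$ holds): the lower bound \eqref{eq:linear_sum} gives $\left|\sum_i(1-\mu^{L_i})\right| \geq \tfrac{1}{2}|1-\mu|\sum_i L_i$, and the upper bound \eqref{eq:linear_squares} gives $\sum_i|1-\mu^{L_i}|^2 \leq 3|1-\mu|^2 \sum_i L_i^2$. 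Combined with $\norm{\xperp} \leq \sqrt{k\delta_k}$, this yields
\begin{equation*}
\tfrac{1}{2}|1-\mu|\sum_i L_i \;\leq\; \sqrt{3}\,|1-\mu|\,\sqrt{\sum_i L_i^2}\,\sqrt{k\delta_k}.
\end{equation*}
Since $\mu\neq 1$ we can divide by $|1-\mu|$, square, and divide by $k^2$ to obtain exactly \eqref{eq:smallangle L Lsq}.

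I do not anticipate a serious obstacle here; all three ingredients (the projection trick, the norm-squared bound on $\xperp$ from Lemma \ref{lem:parallel_1}, and the linearization of $\mu^{L_i}$ from Lemma \ref{lem:linear_small_angles}) are already in place. The only modestly delicate point is the algebraic simplification after projecting onto $\1$: one must verify that the term $\1^T \xperp$ drops out cleanly so that both sides of the scalar identity feature only the quantities $1-\mu^{L_i}$, which is what makes the subsequent application of Lemma \ref{lem:linear_small_angles} possible.
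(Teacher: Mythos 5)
Your proposal is correct and follows essentially the same route as the paper's own proof: project the eigenvalue equation onto $\1$ using column-stochasticity of $C$ and $A$, apply Cauchy--Schwarz together with $\norm{\xperp}^2\le k\delta_k$ from Lemma \ref{lem:parallel_1}, and then use \eqref{eq:linear_sum} and \eqref{eq:linear_squares} from Lemma \ref{lem:linear_small_angles} before cancelling $|1-\mu|$. The only difference is cosmetic (you square at the end rather than working with squared quantities throughout), so no further changes are needed.
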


\begin{proof}
Suppose there exists a solution $x,\mu$ to \eqref{eq:eig_main}, with $\mu\in \smallangles$. Pre-multiplying \eqref{eq:eig_main} by $\1^T$ shows that there would hold 
\begin{equation}
\1^T \Dm x = \1^T C Ax = \1^T Ax = \1^Tx  ,
\end{equation}
where we use that both the circulant $C$ and the matrix $A$ are column-stochastic and thus
$$
\1^T \Dm (\xpar  + \xperp) = \1^T(\xpar  + \xperp).
$$
Remembering $\Dm_{ii}= \mu^{L_i}$ and our scaling assumption $\xpar = \1$, we  have then
$$
\sum_i \mu^{L_i} (1+\xperp_i) =  \sum_i (1+\xperp_i),
$$
which implies
\begin{equation}\label{eq:small_a=square}
 | \sum_i (1-\mu^{L_i})|^2   =|\sum_i (1-\mu^{L_i}) \xperp_i|^2 .
\end{equation}
Now applying Cauchy-Schwartz together with Lemma \ref{lem:parallel_1} 
\begin{align*}
\left|\sum_i (1-\mu^{L_i}) \xperp_i\right|^2 & \leq  \prt{\sum_{i} |1-\mu^{L_i}|^2   } \norm{\xperp}^2\\
& \leq \delta_k k \prt{\sum_{i} |1-\mu^{L_i}|^2   }.
\end{align*}
Reintroducing this last inequality in \eqref{eq:small_a=square} and dividing by $k^2$ leads to 
$$
\left|\frac{1}{k} \sum_i (1-\mu^{L_i})\right|^2 \leq \delta_k \prt{\frac{1}{k}\sum_{i} |1-\mu^{L_i}|^2   }.
$$
Relying on $S(M)$ and $\mu\in\smallangles$ we may apply Lemma \ref{lem:linear_small_angles}, bound the left hand side below using \eqref{eq:linear_sum}, the right hand side above using \eqref{eq:linear_squares}, thus we get

$$
 \frac 1 4 \left|\frac{1}{k} \sum_i L_i (1-\mu) \right|^2 \leq 3 \delta_k \prt{\frac{1}{k}\sum_{i} |1-\mu|^2 L_i^2}.
$$
By construction $L_i$ are reals, which allows to rearrange as
$$
 |1-\mu|^2 \prt{\frac{1}{k} \sum_i L_i}^2 \leq 12 \delta_k |1-\mu|^2  \prt{\frac{1}{k}\sum_{i} L_i^2    },
$$
which implies the desired result. 
\end{proof}

\subsection{Impossibility for the \Arcmod Model}

We now show a concentration result on the moments of $(L_i)$, which we will exploit to show that condition \eqref{eq:smallangle L Lsq} cannot be fulfilled when $k$ is large. 

\begin{lemma}
\label{lm:L Lsq concentration}
  For the random arc lengths $(L_i)$ we have
\begin{align}
  \label{eq:sumL big}
  \PP \prt{\frac 1 k \sum_i L_i \leq \frac L 2} &\leq \frac 4 k,\\
  \label{eq:sumLsq small}
  \PP \prt{\frac 1 k \sum_i L_i^2 \geq 4L^2} &\leq \frac 5 k.
\end{align}
\end{lemma}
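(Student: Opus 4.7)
The plan is to prove both bounds by a straightforward application of Chebyshev's inequality to the i.i.d. sum, using standard moment formulas for the geometric distribution with parameter $1/L$ on $\{1,2,\ldots\}$.

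For bound \eqref{eq:sumL big}, I would first recall that $\E[L_i] = L$ and $\Var(L_i) = (1-1/L)L^2 = L^2 - L \leq L^2$. Since $\{\frac{1}{k}\sum_i L_i \leq L/2\} \subseteq \{\,|\frac{1}{k}\sum_i L_i - L| \geq L/2\,\}$, Chebyshev's inequality applied to the i.i.d. average yields
\begin{equation*}
\PP\!\left(\frac{1}{k}\sum_i L_i \leq \frac{L}{2}\right) \leq \frac{\Var(L_i)/k}{(L/2)^2} \leq \frac{4}{k},
\end{equation*}
which is precisely the desired bound.

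For bound \eqref{eq:sumLsq small}, the strategy is the same but applied to the i.i.d. variables $L_i^2$. First I would compute $\E[L_i^2]$ using the factorial moment $\E[L_i(L_i-1)] = 2(1-1/L)L^2$, obtaining $\E[L_i^2] = 2L^2 - L \leq 2L^2$. Then the event $\{\bar{L^2} \geq 4L^2\}$ is contained in $\{|\bar{L^2} - \E\bar{L^2}| \geq 2L^2\}$, so Chebyshev gives
\begin{equation*}
\PP\!\left(\frac{1}{k}\sum_i L_i^2 \geq 4L^2\right) \leq \frac{\Var(L_i^2)/k}{(2L^2)^2}.
\end{equation*}
The main quantitative step is bounding $\Var(L_i^2) = \E[L_i^4] - \E[L_i^2]^2 \leq 20 L^4$. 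I would do this by expanding $L_i^4$ in factorial moments via Stirling numbers of the second kind, using the standard formula $\E[L_i(L_i-1)\cdots(L_i-m+1)] = m!\,(1-1/L)^{m-1} L^m$ for geometric variables on $\{1,2,\ldots\}$. This yields an exact polynomial in $L$ whose leading term is $24L^4$, so that $\Var(L_i^2) = 20L^4 - O(L^3)$. Plugging in gives the bound $5/k$.

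Neither step is a real obstacle; the only mildly tedious calculation is the explicit expansion needed to verify that the leading coefficient of $\Var(L_i^2)$ is exactly $20$ (not larger), which is what determines the constant $5$ in the stated bound. Both bounds are tight enough for the union bound arguments used later in the section.
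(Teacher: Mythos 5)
Your proposal is correct and follows essentially the same route as the paper: Chebyshev's inequality applied to the i.i.d.\ averages of $L_i$ and of $L_i^2$, with the geometric moment computations giving $\Var(L_1)\le L^2$ and $\Var(L_1^2)\le 20L^4$ (the paper uses exactly these bounds, centering the second application at $2L^2$ slightly less carefully than you do). No substantive difference.
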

\begin{proof}
  Both inequalities follow from simple applications of Chebyshev's inequality based on the moments of the Geometric distribution.
  For the first inequality, consider
  $$
  \PP \prt{\frac 1 k \sum_i L_i \leq \frac L 2} \le \PP \prt{\left| \frac 1 k \sum_i L_i - L \right| \geq \frac L 2} \le \frac{\frac{1}{k} D^2(L_1)}{L^2/4} < \frac{4 L^2}{kL^2} = \frac 4 k.
  $$
  For the second one, we have to work with the i.i.d.\ series of random variables $L_1^2, L_2^2, \ldots,L_k^2$, for which $D^2(L_1^2) = E(L_1^4) - E(L_1^2)^2 < 20L^4$. 
  $$
  \PP \prt{\frac 1 k \sum_i L_i^2 \geq 4L^2} = \PP \prt{\left| \frac 1 k \sum_i L_i^2 - 2L^2 \right| \geq 2L^2} \le \frac{\frac 1 k D^2(L_1^2)}{4L^4} < \frac{20L^4}{4kL^4} = \frac 5 k.
  $$
\end{proof}

This last lemma already allows deriving a first conclusion for the \arcmod model.

\begin{proposition}\label{prop:imposs_small_angles_arclength}
Assume the conditions of Theorem \ref{thm:indep}.
$G_L(A)$ has no eigenvalue in $\smallangles$ a.a.s.\ as $k,L\to \infty$.
\end{proposition}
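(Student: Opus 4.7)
The plan is to combine three ingredients already in hand: the deterministic criterion of Proposition \ref{prop:small_angles->L_i}, the long-arc control of Lemma \ref{lem:bound_Li}, and the moment concentration of Lemma \ref{lm:L Lsq concentration}. Together with the fact that $\delta_k\to 0$ from Lemma \ref{lem:parallel_1} (which is applicable since $\gamma>7+\alpha>\alpha+1$), this forces a contradiction on an event of probability tending to $1$.

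Concretely, I would first fix the constants $M>1$ (entering the definition of $\BL$) and $M_\varphi\ge 3\pi M$ (needed to invoke Lemma \ref{lem:linear_small_angles}); any choice such as $M=2$ works. Then I would define the good event
\[
\G := \BL \cap \left\{\tfrac 1 k \sum_i L_i \geq \tfrac L 2\right\} \cap \left\{\tfrac 1 k \sum_i L_i^2 \leq 4L^2\right\}.
\]
A union bound using Lemmas \ref{lem:bound_Li} and \ref{lm:L Lsq concentration} gives $P(\G)\ge 1 - 2k^{1-M} - 4/k - 5/k$, which tends to $1$, so $\G$ occurs a.a.s.

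Next I would argue by contradiction: assume there is some $\mu\in\smallangles$ that is an eigenvalue of $G_L(A_k)$, and we are on $\G$. Since $\BL$ holds, Proposition \ref{prop:small_angles->L_i} applies and yields
\[
\left(\tfrac 1 k \sum_i L_i\right)^2 \leq 12 \delta_k \left(\tfrac 1 k \sum_i L_i^2 \right).
\]
Substituting the two moment bounds valid on $\G$ gives $(L/2)^2 \leq 12 \delta_k \cdot 4 L^2$, i.e.\ $\delta_k \geq 1/192$. On the other hand, under the hypotheses of Theorem \ref{thm:indep}, the condition $\gamma>\alpha+1$ of Lemma \ref{lem:parallel_1} holds, so $\delta_k\to 0$ as $k\to\infty$. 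This is incompatible with $\delta_k\ge 1/192$ for $k$ large enough, which contradicts the existence of such $\mu$.

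The main obstacle is really only bookkeeping: one has to make sure the constants $M$ and $M_\varphi$ are chosen once and for all so that both Lemma \ref{lem:linear_small_angles} (used to derive Proposition \ref{prop:small_angles->L_i}) and Lemma \ref{lem:bound_Li} apply simultaneously, and to verify that the rate at which $\delta_k\to 0$ (essentially $\log^{-(\gamma-\alpha-1)}k$) is comfortably faster than any fixed positive threshold such as $1/192$. No delicate probabilistic argument beyond what is already proved is required; the entire statement follows from elementary Chebyshev-style estimates combined with the deterministic reduction already established.
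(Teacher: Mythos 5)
Your proposal is correct and follows essentially the same route as the paper: invoke the deterministic criterion of Proposition \ref{prop:small_angles->L_i} on the high-probability event where the empirical first and second moments of the $L_i$ are controlled (Lemma \ref{lm:L Lsq concentration}), obtain $\delta_k \ge 1/192$, and contradict $\delta_k \to 0$ from Lemma \ref{lem:parallel_1}. Your only deviation is to also intersect explicitly with the event $S(M)$ via Lemma \ref{lem:bound_Li}, which the paper's proof leaves implicit even though Proposition \ref{prop:small_angles->L_i} requires it; this is a harmless (indeed slightly more careful) piece of bookkeeping, not a different argument.
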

\begin{proof}
    Assume that neither of the two events of \eqref{eq:sumL big} or \eqref{eq:sumLsq small} appearing in Lemma \ref{lm:L Lsq concentration} hold, which is true with probability at least $1-9/k = 1-o(1)$. Then Proposition \ref{prop:small_angles->L_i} implies
\begin{equation*}
\frac{L^2}{4} \leq \prt{\frac{1}{k}\sum_{i} L_i}^2 \leq  12 \delta_k \prt{\frac{1}{k}\sum_{i} L_i^2} \leq 48 \delta_k L^2.
\end{equation*}
However, we know by Lemma \ref{lem:parallel_1} that $\delta_k\rightarrow 0$ as $k\rightarrow \infty$, therefore once $\delta_k < 1/192$ we get a contradiction showing that w.h.p.\ no $\mu\in \smallangles$ can be a solution to \eqref{eq:eig_main} for the \arcmod model.
\end{proof}

\subsection{Impossibility for the \Cyclemod Model}

We now turn to the \cyclemod model. Our approach will be to consider it with respect to the
\arcmod model, conditioned to the total number of nodes being exactly $n$, which is justified by the following lemma,  similar to Proposition 3 in \cite{gerencser2019improved}.

\begin{lemma}\label{lem:modelconditioning}
The probability distribution for the eigenvalues of $H_n(A)$ is exactly the probability distribution for the eigenvalues of $G_L(A)$ conditioned to $\sum_i L_i=n$.
\end{lemma}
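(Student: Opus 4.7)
The strategy is to observe that the eigenvalues of both $H_n(A)$ and $G_L(A)$ are determined entirely, through the reduced equation $CAx = D(\mu)x$ from equation \eqref{eq:eig_main}, by $A$ (fixed) and by the cyclic sequence of arc lengths $(L_1,\dots,L_k)$. Therefore it suffices to show that the law of $(L_1,\dots,L_k)$ under $H_n(A)$ coincides with its law under $G_L(A)$ conditioned on $\sum_i L_i=n$.

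First I would identify the law of arc lengths in the \cyclemod model. The model first picks $k$ edges uniformly from the $n$ edges of the cycle, then picks one of them uniformly to be the distinguished edge $(e_1,b_2)$. Once we record $b_2$ together with the induced arc lengths $(L_1,\dots,L_k)$, all $n\binom{n-1}{k-1}$ such pairs are equiprobable: the vertex $b_2$ can be any of the $n$ vertices, and for each choice of $b_2$ the arc lengths form an arbitrary composition of $n$ into $k$ positive parts. Marginalising over $b_2$, each composition of $n$ into $k$ positive parts occurs with the same probability $1/\binom{n-1}{k-1}$.

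Second, I would compute the conditional law under $G_L(A)$. Since the $L_i$ are i.i.d.\ $\mathrm{Geo}(1/L)$ (with support $\{1,2,\dots\}$), for any composition $(\ell_1,\dots,\ell_k)$ of $n$ one has
$$
\PP(L_1=\ell_1,\dots,L_k=\ell_k)=\prod_{i=1}^k (1-1/L)^{\ell_i-1}(1/L)=(1-1/L)^{n-k}(1/L)^k,
$$
which depends only on $n$ and $k$. Hence conditioning on $\sum_i L_i=n$ gives the uniform distribution on the set of compositions of $n$ into $k$ positive parts, matching the law obtained for $H_n(A)$.

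Finally, one must note that although in $H_n(A)$ there is additional randomness in the choice of the starting vertex $b_2$, this only acts as a relabelling of the nodes of the Markov chain; it leaves the multiset of eigenvalues of $H_n(A)$ unchanged. Similarly, a cyclic rotation of the indexing of the arcs corresponds in \eqref{eq:eig_main} to a simultaneous conjugation of $C$, $A$ and $D(\mu)$ by the same cyclic permutation, which again preserves the spectrum. Thus the spectrum is a function of $A$ and of $(L_1,\dots,L_k)$ only through the eigenvalue equation, and since the arc-length distributions coincide, the induced laws on the spectra coincide as well. The main thing to be careful about is this last bookkeeping step — making sure the labelling degrees of freedom in the two models are accounted for and that they do not affect the eigenvalue law — but once the arc-length distributions are shown to agree, the conclusion is immediate.
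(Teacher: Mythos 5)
Your proposal is correct and takes essentially the same route as the paper's own proof: both identify a realization of $H_n(A)$ with a starting position on the cycle plus a composition $(L_1,\dots,L_k)$ of $n$, show that the induced law on compositions is uniform in both models (in the \arcmod case because the probability $(1/L)^k(1-1/L)^{n-k}$ depends only on $n$ and $k$), and note that the extra positional degree of freedom is a mere relabelling of vertices that leaves the spectrum unchanged. The closing remark about cyclic rotation of the arc indexing is superfluous (your ordered-tuple laws already match), but it is stated as a simultaneous conjugation and hence harmless.
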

The idea of the proof is that the two (conditional) distributions are uniform and lead to transition matrices that have the same structure, but there is an additional technical element related to the extra degree of freedom of rotation in $H_n(A)$.
\begin{proof}

Observe that for a given matrix $A$, the \arcmod matrix $G_L(A)$ is entirely determined by the realization of the $(L_i)$, wile the \cyclemod matrix $H_n(A)$ is entirely determined by the realization of the edges to be removed from the cycles and their label, see Definitions \ref{def:arcmod} and \ref{def:cyclemod}. 
We are going to build a bijection between the realizations of these edges to be removed and realizations of $(L_i)$ summing to $n$ concatenated with one number between $1$ and $n$, playing no role the transition matrix construction for \arcmod but that will be useful for our argument:
For given $(L_i)$ summing to $n$ and the additional $j=1,\dots,n$ meant to represent the position of the first removed edge, we introduce the mapping $T$ on $((L_i),j)$ 
by setting $[b_1,e_1]=[j,j+L_1-1]$, then recursively $[b_i,e_i]=[e_{i-1}+1,e_{i-1}+L_i]$, identifying $n+p$ to $p$ if the indices exceed $n$. This mapping is clearly a bijection, noting that we distinguish the vertices of $A$. Moreover, it preserves the eigenvalues of the matrix (and indeed, the structure of the graph). 

Observe now that the \arcmod transition matrix obtain from the $(L_i)$ is isomorphic to the \cyclemod transition matrix obtained from $T((L_i),j)$ (and vice-versa), and have in particular the same eigenvalues.

To conclude, we also ensure the probability distributions are being matched.
Observe that the probability of a realization of $G_L(A)$ when $\sum_i L_i=n$ is $L^{-k}(1-\frac 1L)^{n-k}$, consequently it is uniform once conditioned on the event $\{\sum_i L_i = n\}$.
Applying the induced measure transformation $T^*$ we get the uniform distribution. Therefore, this matches the distribution of $H_n(A)$, which is uniform by construction. Consequently, combined with the isomorphism claim above, this shows the distribution of eigenvalues are equal as well.
\end{proof}

We will exploit this equivalence to extend Proposition \ref{prop:imposs_small_angles_arclength} to the \cyclemod model by showing that the high probability events discussed for the \arcmod model keep a high probability when conditioned to the sum of the arc lengths being $n$, for $L=n/k$. For this purpose, we need to establish two claims. 

\begin{lemma}\label{lem:exceed_avg_04}
For $k, L= \frac nk$ large enough, for the \arcmod model
we have
$$
P\left(\sum_i L_i \geq n\right) \geq 0.4.
$$ 
\end{lemma}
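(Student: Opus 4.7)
The approach is a direct central limit theorem argument. The key observation is that $E[S_k] = kL = n$ exactly (since $E[L_i] = L$ for $L_i \sim Geo(1/L)$, where $S_k := \sum_{i=1}^k L_i$), so the claim asserts that $S_k$ exceeds its own mean with probability at least $0.4$; in the large-$k$ limit this probability tends to $1/2$, leaving comfortable slack.

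Concretely, the plan is to invoke Berry-Esseen. Using $\text{Var}(L_i) = L(L-1)$ and $E|L_i - L|^3 = O(L^3)$, the ratio $E|L_i - L|^3 / \text{Var}(L_i)^{3/2}$ is bounded uniformly in $L \geq 2$, so writing $\sigma_k = \sqrt{kL(L-1)}$ the Berry-Esseen theorem gives
$$
\sup_{t \in \R} \left| P\!\left(\frac{S_k - n}{\sigma_k} \leq t\right) - \Phi(t) \right| = O\!\left(\frac{1}{\sqrt{k}}\right),
$$
with $\Phi$ the standard normal CDF and the constant independent of $L$. To handle the integer-valued threshold, I would rewrite $P(S_k \geq n) = 1 - P(S_k \leq n-1)$ and apply the Berry-Esseen estimate at $t = -1/\sigma_k$, obtaining
$$
P(S_k \geq n) \geq 1 - \Phi\!\left(-\frac{1}{\sigma_k}\right) - O\!\left(\frac{1}{\sqrt{k}}\right).
$$
Since $\sigma_k \to \infty$ as $k, L \to \infty$, the normal term converges to $\Phi(0) = 1/2$ and the Berry-Esseen error vanishes, so $P(S_k \geq n) \to 1/2$. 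For $k$ and $L$ large enough this exceeds $0.4$, yielding the desired bound.

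The main obstacle is entirely cosmetic: the statement follows almost immediately from the CLT, but one must (i) handle the integer-valued threshold by shifting from $n$ to $n-1$ (incurring a vanishing error $\Phi(0) - \Phi(-1/\sigma_k)$ because $\sigma_k \to \infty$), and (ii) check that the Berry-Esseen constant is uniform in $L$, which holds thanks to the explicit moment identities for the geometric distribution. If one wishes to avoid invoking Berry-Esseen directly, an alternative is a two-sided Chebyshev argument combined with symmetry of the limiting normal, but the Berry-Esseen route is cleaner and gives the conclusion essentially for free.
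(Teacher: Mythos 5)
Your proposal is correct and follows essentially the same route as the paper: both center $S_k=\sum_i L_i$ at its mean $kL=n$, apply the Berry--Esseen theorem at that point using the explicit geometric moment formulas (variance $L^2(1-\frac1L)$ and third absolute moment $O(L^3)$, giving an error $O(1/\sqrt{k})$ uniform in $L$), and conclude that the probability tends to $1/2>0.4$. The only difference is cosmetic: you handle the integer threshold by shifting to $n-1$, while the paper bounds the complementary event $P(S_k<n)$ directly at position $0$.
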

\begin{proof}
  We exploit the Berry-Esseen theorem ensuring that the sum of random variables in question are sufficiently close to a Gaussian, therefore being above their expectation has probability around $1/2$, as needed. Let us center the random variables, defining $L_i' = L_i - L$. 
  Knowing that $L_i$ are Geometric random variables, we have
  $$
  \E L_i' = 0, \quad \E L_i'^2 = L^2\left(1-\frac 1 L\right).
  $$
We rewrite the complement of the probability to bound as $\PP(\sum_i L_i < kL = n) = \PP(\sum_i (L_i-L)<0)$. To get a handle on it, we apply the Berry-Esseen theorem at position $0$ to get
  $$
  \left| \PP\left(\frac{\sum_i L_i'}{\sqrt{k}DL_i'} < 0 \right) - \Phi(0) \right|
  \le \frac{C \E|L_i'|^3}{\sqrt{k} D^3( L_i')}
  $$
  for some global positive constant $C$. We now estimate different quantities involved in this bound. In general, for some $X\sim Geo(p)$ it is known that
  $
  \E X^3 = \frac{p^2-6p+6}{p^3}
 $. In our case, since $L_i\sim Geo(1/L)$, we have thus for growing $L$
 $$
  \E|L_i'|^3 = \E|L_i-L|^3 \le \E L_i^3 + 3L \E L_i^2 + 3L^2 \E L_i + L^3 = (16+o(1))L^3.
  $$
At the same time, $D^2(L'^2) = \E L_i'^2$, so  together with $\Phi(0)=1/2$ we obtain
 $$
  \left|\PP\left(\sum_i (L_i-L) < 0\right) - 1/2\right|
  \le \frac{C (16+o(1))L^3}{\sqrt{k} (1+o(1)L^2)^{3/2}} = \frac{16C(1+o(1))}{\sqrt{k}}.
  $$
  which gets arbitrarily small when $k$ grows, and in particular smaller than 0.1 if $k$ is sufficiently large, implying the desired result.
\end{proof}

\begin{lemma}\label{lem:increase_cond_proba}
  For any $K,m\in\Z^+$ there holds
  $$
  \PP\left(\sum_i L_i^2\geq K \mid \sum_i L_i  = m+1\right) \geq \PP\left(\sum_i L_i^2\geq K \mid \sum_i L_i  = m\right).
  $$
\end{lemma}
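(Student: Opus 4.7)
The plan is a direct coupling argument. First, I observe that the joint mass $(1/L)^k(1-1/L)^{\sum_i \ell_i - k}$ of $(L_1,\dots,L_k)$ depends only on $\sum_i \ell_i$, so conditional on $\sum_i L_i = M$ the vector $(L_i)$ is uniformly distributed on compositions of $M$ into $k$ positive parts. Equivalently, via the classical stars-and-bars bijection, such a composition is encoded as a uniformly random subset $\{q_1 < \cdots < q_{k-1}\}$ of $\{1,\dots,M-1\}$, with arcs $L_j = q_j - q_{j-1}$ under the convention $q_0 = 0$, $q_k = M$.

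I will construct a joint coupling of a uniform size-$(k-1)$ subset $T$ of $\{1,\dots,m\}$ and a uniform size-$(k-1)$ subset $S$ of $\{1,\dots,m-1\}$ such that the associated sums of squares $f(T)$ (with total $m+1$) and $f(S)$ (with total $m$) satisfy $f(T) > f(S)$ almost surely; stochastic dominance then gives the claim for every $K$. The coupling is as follows. Sample $T$ uniformly. If $m\notin T$, set $S := T$: the two subsets have identical breakpoints, but $T$'s final arc is one unit longer than $S$'s, so $f(T)-f(S) = 2S_k + 1 > 0$. If $m\in T$, set $T' := T \setminus \{m\}$ and pick $m'$ uniformly from the $m-k+1$ elements of $\{1,\dots,m-1\}\setminus T'$; set $S := T' \cup \{m'\}$. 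In each of the three subcases (where $m'$ lands before, between, or after the breakpoints of $T'$), the composition of $S$ is obtained from that of $T$ by splitting some arc of length $e+f$ into two arcs $(e,f)$ while simultaneously deleting the trailing arc of length $1$ present in $T$ (coming from the artificial endpoint at $m$). Thus $f(T) - f(S) = (e+f)^2 + 1 - e^2 - f^2 = 2ef + 1 > 0$.

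Two routine verifications remain: (i) that $S$ is uniform on size-$(k-1)$ subsets of $\{1,\dots,m-1\}$, which follows from combining the two branches to yield total mass $\tfrac{1}{\binom{m}{k-1}}\bigl(1 + \tfrac{k-1}{m-k+1}\bigr) = \tfrac{1}{\binom{m-1}{k-1}}$ at each target subset; and (ii) the case analysis recovering the uniform expression $f(T) - f(S) = 2ef + 1$ across all three geometric configurations of $m'$. The main point of care is in choosing the right coupling rule in the case $m\in T$: a naïve alternative such as "add $1$ to a uniformly random coordinate" fails to produce a uniform marginal, so the specific "replace $m$ by a uniform non-occupied position" prescription is what simultaneously forces the marginal of $S$ to be uniform and makes the sum-of-squares strictly increase; beyond this design choice, no deeper obstacle is anticipated.
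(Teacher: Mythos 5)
Your proof is correct, but it takes a genuinely different route from the paper's. The paper also starts from the observation (identical to yours) that the conditional laws given $\sum_i L_i=m$ and $\sum_i L_i=m+1$ are uniform on compositions, but it then couples the two laws directly in composition space: it assigns weight $j_t$ to each pair of compositions that agree in all coordinates except one index $t$ where $j^+_t=j_t+1$; each composition of $m$ then carries total weight $\sum_t j_t=m$ and each composition of $m+1$ carries total weight $\sum_t (j^+_t-1)=m+1-k$, so after normalization both marginals are uniform, and by construction $J^+\geq J$ coordinatewise, which gives $\sum_i (J^+_i)^2\geq \sum_i J_i^2$ with no further computation. Your stars-and-bars coupling (keep $T$ when $m\notin T$, otherwise relocate the breakpoint at $m$ to a uniformly chosen free position) also has the correct uniform marginals, as your identity $\frac{1}{\binom{m}{k-1}}\left(1+\frac{k-1}{m-k+1}\right)=\frac{1}{\binom{m-1}{k-1}}$ confirms, but it does \emph{not} give elementwise dominance of the arc vectors: in the branch $m\in T$ the passage from $S$ to $T$ merges two arcs $(e,f)$ into $e+f$ and appends a trailing arc of length $1$, changing two coordinates in opposite directions, so you must check the increase of the specific statistic by hand via $2a+1>0$ and $2ef+1>0$. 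That is a legitimate trade-off: the paper's size-biased coupling is more robust, yielding monotonicity of any functional that is nondecreasing in the arc lengths, while yours is more concrete and combinatorial but its dominance is tied to the convexity-type computation for $\sum_i L_i^2$ (degenerate cases such as $m<k$, where the conditioning event is null, are vacuous in both arguments).
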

\begin{proof}
Recall that  all realizations $(L_1,\dots,L_k)$ for which $\sum_{i} L_i= m$ (resp.\ $m+1$) have the same probability, in other words, the conditional distributions are uniform (see the proof of Lemma \ref{lem:modelconditioning}).

We are going to jointly construct coupled random variables $J,J^+$ on $\{(j_i)\in\Z_+^k~|~\sum j_i = m\}$ and $\{(j^+_i)\in\Z_+^k~|~\sum j^+_i = m+1\}$ whose marginals are also uniform on their range: 
For a pair $((j_i),(j^+_i))$ we assign a positive weight if they agree on all but one coordinate, where we have $j_t+1 = j^+_t$. In this case the weight given is $j_t$. We get a distribution on the pairs of $k$-tuples simply by normalizing these weights, and sample $(J,J^+)$ from this distribution.

We now confirm that the marginals of this distribution are indeed uniform on their range. Concerning $J$, a realization $(j_i)$ will appear in $k$ pairs with positive weights, corresponding to the $k$ possibilities of increasing a coordinate,
leading to a total weight of
$$\sum_{i} j_i = m.$$
This is indeed the same on all realizations, consequently the marginal for $J$ is uniform. 
Concerning $J^+$, each $j^+$ appears in up to $k$ pairs, corresponding to all the ways of decreasing a coordinate larger than 1. This has total weight 
$$\sum_{i} (j_i^+-1) = m+1-k,$$
where the coordinates $j^+_i=1$ do not need to be explicitly excluded as their contribution to the sum is zero. In the end, we once again get a constant for all realizations of $J^+$, confirming its uniform distribution.

By construction we have $J^+\geq J$ elementwise for any realization. Together with the fact that that the conditional distribution of $(L_1,\dots,L_k)$ is exactly the distribution of $J$ (resp.\ $J^+$) when conditioned on the sum being $m$ (resp.\ $m+1$), this implies
$$ 
\PP(\sum_i L_i^2\geq K \mid \sum_i L_i  = m+1) = \PP(\sum_i J_i^{+2} \ge K) \ge \PP(\sum_i J_i^2 \ge K) = \PP(\sum_i L_i^2\geq K \mid \sum_i L_i  = m). $$
\end{proof}

Now that we have built the link between the \arcmod and \cyclemod model and have collected some technical bounds related to the conditional probabilities involved, we are ready to prove the eigenvalue impossibility in $\smallangles$ for the \cyclemod model.
\providecommand{\elas}{\mathcal{E}}
\begin{proof}[Proof of Proposition \ref{prp:smallangle_main}]
Let $\elas$ denote the event $\sum_i L_i^2 > 4kL^2$. It follows from Lemma \ref{lm:L Lsq concentration} that $\PP(\elas)\leq \frac{5}{k}$. Moreover, 
the assumption of Theorem \ref{thm:main} imply that $L=n/k \to \infty$ when $n$ grows, so that Lemma \ref{lem:exceed_avg_04} applies. Together with Lemma \ref{lem:increase_cond_proba}, it shows that 
$\PP(\sum_i L_i^2 \mid \sum_i L_i = m)$ increases with $m$, and also that $P(\sum_i L_i \geq n) \geq 0.4$. Combining these we get
\begin{align*}
\frac{5}{k} &\geq \PP(\elas ) \\&= \PP(\elas \mid \sum_i L_i < n)\PP(\sum_i L_i <n) + \PP(\elas \mid \sum_i L_i = n)\PP(\sum_i L_i =n) +\PP(\elas \mid \sum_i L_i > n)\PP(\sum_i L_i >n)\\
&\geq   \PP(\elas \mid \sum_i L_i < n)\PP(\sum_i L_i <n) + \PP(\elas \mid \sum_i L_i = n)\bigg(\PP(\sum_i L_i =n)+\PP(\sum_i L_i >n)\bigg)\\
& \geq  0 + \PP(\elas \mid \sum_i L_i = n)\cdot 0.4.
\end{align*}
Hence 
\begin{equation}
    \label{eq:Lsq_cond}
    \PP\left(\sum_i L_i^2 > 4kL^2 \mid \sum_i L_i = n\right) \le \frac{25}{2k}.
\end{equation}

Let us now consider on the possible existence of an eigenvalue $\mu\in \smallangles$. Proposition \ref{prop:small_angles->L_i} shows this is possible only if condition \eqref{eq:smallangle L Lsq} holds, or if $\BL$ does not hold. Focusing first on condition \eqref{eq:smallangle L Lsq}, we see that when $\sum_i L_i= n$, it becomes  $\sum_i L_i^2 \geq \frac{k}{12\delta_k}L^2$. 
Therefore the probability that it holds can be rewritten as
$$
\PP\prt{\frac{k}{12 \delta_k}L^2 \le \sum_{i} L_i^2 \mid \sum_i L_i = n}.
$$
Knowing that $\delta_k\rightarrow 0$ as $k\to\infty$ (see Lemma \ref{lem:parallel_1}), the coefficient on the left hand side in front of $kL^2$ eventually increases above $4$, so this probability is $o(1)$ by \eqref{eq:Lsq_cond}, excluding the possibility of a solution w.h.p.
Coming back to $\BL$, we have seen in Lemma \ref{lem:bound_Li} that the probability that it does not hold in the \arcmod model is at most $2k^{1-M}$. At the same time, Lemma 5 in \cite{gerencser2019improved} established that $\PP(\sum_i L_i=n)\geq 1/n$. Relying on the polynomial relation between $n,k$ required - inherited from Theorem \ref{thm:main} - i.e. $\rho_l\le\log n/\log k\le\rho_u$, if $M>1+\rho_u$, then $\PP(S(M)) = o(1/n)$, hence $S(M)$ must have asymptotically vanishing probability even when conditioned on $\sum_i L_i=n$. Therefore the probability of $S(M)$ not to hold and that of condition \eqref{eq:Lsq_cond} holding are both vanishingly small for the \cyclemod model, implying by Proposition \ref{prop:small_angles->L_i} that a.a.s. there is no eigenvalue in $\smallangles$.
\end{proof}

\section{Large Angles}\label{sec:large_angles}

To complement to the previous section, we now show that 
there are no eigenvalues with large absolute values and large arguments a.a.s. More precisely, we aim for the following:

\begin{proposition}
\label{prp:largeangle_main} 
Assume the conditions of Theorem \ref{thm:indep} hold, then $G_L(A)$ has no  eigenvalue in $\largeangles$ a.a.s.\ as $L,k\to \infty$. Similarly, assume the conditions of Theorem \ref{thm:main} hold, then $H_n(A)$ has no eigenvalue in $\largeangles$ a.a.s.\ as $n,k\to \infty$.
\end{proposition}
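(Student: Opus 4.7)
My approach mirrors Section~\ref{sec:small_angles}: I extract from the modified eigenvalue equation \eqref{eq:eig_main} a deterministic inequality on $(L_i)$ that any eigenvalue $\mu\in\largeangles$ must force, then show a.a.s.\ that this inequality fails uniformly in $\mu$. Suppose $\mu\in\largeangles$ is an eigenvalue with eigenvector $x$, normalized as in the small-angle case so that $\xpar=\1$ and $\norm{\xperp}^2\leq k\delta_k$ (Lemma~\ref{lem:parallel_1}). Rewriting \eqref{eq:eig_main} as $(\Dm-CA)\xperp = \1-\Dm\1$ (using $CA\1=\1$) and bounding $\norm{\Dm},\norm{CA}\leq 1$ yields
$$
\sum_i|1-\mu^{L_i}|^2 \;=\; \norm{\1-\Dm\1}^2 \;\leq\; 4\norm{\xperp}^2 \;\leq\; 4k\delta_k,
$$
which is the large-angle analogue of \eqref{eq:smallangle L Lsq}.

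It then remains to show this bound fails a.a.s., uniformly on $\largeangles$. Using $L_i\sim Geo(1/L)$, I compute $\E[1-\mu^{L_i}] = (1-\mu)/(1-q\mu)$ with $q=1-1/L$, so Jensen's inequality gives $\E[|1-\mu^{L_i}|^2] \geq |1-\mu|^2/|1-q\mu|^2$. For $\mu\in\largeangles$ this ratio is at least $c/\log^2 k$ for some constant $c>0$: the denominator is $O(1/L^2+|\arg\mu|^2)$ while the numerator is at least $|\arg\mu|^2 \geq (\pi/(M_\varphi L\log k))^2$, with the minimum attained when $|\arg\mu|$ sits at its lower bound. Since $\gamma > 7+\alpha$ forces $\delta_k\ll 1/\log^2 k$, the threshold $4\delta_k$ sits well below half this expectation, and as each $|1-\mu^{L_i}|^2\in[0,4]$, Hoeffding gives for every fixed $\mu$
$$
\PP\prt{\tfrac 1 k \sum_i |1-\mu^{L_i}|^2 \leq 4\delta_k} \leq \exp(-c'k/\log^4 k).
$$
To turn this into a uniform bound, I use that under $\BL$ the map $\mu\mapsto\frac 1 k\sum_i|1-\mu^{L_i}|^2$ is $O(L\log k)$-Lipschitz (each summand's gradient is bounded by $2L_i\leq 2ML\log k$), and that $\largeangles$ lies in an annulus of radial width $1/(L\log^\gamma k)$ and hence of area $O(1/(L\log^\gamma k))$. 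A mesh of size $\sim 1/(L\log^3 k)$ therefore produces an $\epsilon$-net of polynomial size $O(L\log^{6-\gamma}k)$, after which a union bound closes the \arcmod case.

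Transferring the conclusion to $H_n(A)$ then proceeds exactly as at the end of Section~\ref{sec:small_angles}: Lemma~\ref{lem:modelconditioning} identifies the eigenvalues of $H_n(A)$ with those of $G_L(A)$ conditioned on $\sum_i L_i = n$, and $\PP(\sum_i L_i = n)\geq 1/n$ (Lemma~5 of \cite{gerencser2019improved}) means conditioning inflates the arc-model bad-event probability by at most a polynomial factor $n=k^{O(1)}$, which is readily absorbed by the super-polynomial Hoeffding decay. The main obstacle I anticipate is the Lipschitz--net balancing in the uniform-concentration step: the Lipschitz constant grows in both $k$ and $L$, forcing a fine mesh, while the usable slack is only of order $1/\log^2 k$; it is precisely the logarithmic headroom between $\delta_k\sim\log^{-(\gamma-1-\alpha)}k$ and $1/\log^2 k$ afforded by $\gamma>7+\alpha$ that makes the net--concentration trade-off succeed.
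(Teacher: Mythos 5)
Your argument is correct in substance, but it takes a genuinely different route from the paper's. The paper works with the first moment $P(\mu)=\frac1k\sum_i\mu^{L_i}$: Lemma \ref{lem:Pmu-near-1} extracts from \eqref{eq:eig_main} the criterion $|1-P(\mu)|\le\sqrt{\delta_k}$, and the contradiction on $\largeangles$ comes from Lemma \ref{lem:paper_polynomial}, whose uniformity in $\mu$ is \emph{imported} from Proposition \ref{prp:cosloss} (Proposition 2 of \cite{gerencser2019improved}): each block of $m_k=\lceil\log^\theta k\rceil$ arcs loses at least $\sigma_k^2$ from $\Re P(\mu)$ \emph{simultaneously for all} $\mu\in\largeangles$ with probability at least $1/3$, and independence across blocks plus a Chernoff bound yields $\Re P(\mu)\le 1-\log^{-\eta}k$ uniformly with probability $1-e^{-\Omega(k/\log^\theta k)}$; no net is needed. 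You instead extract the second-moment criterion $\frac1k\sum_i|1-\mu^{L_i}|^2\le 4\delta_k$ (a stronger consequence of \eqref{eq:eig_main}: by Cauchy--Schwarz it implies $|1-P(\mu)|\le 2\sqrt{\delta_k}$), compute $\E(1-\mu^{L_i})=(1-\mu)/(1-q\mu)$ from the geometric law to get the lower bound $c/\log^2 k$ on the expectation over $\largeangles$, and obtain uniformity by Hoeffding plus a Lipschitz/$\epsilon$-net argument under $\BL$. Your route is self-contained (it does not lean on the external uniform cosine-loss estimate) and in fact only uses $\delta_k=o(\log^{-2}k)$, i.e.\ less than the full strength of $\gamma>7+\alpha$; the paper's route buys freedom from the net bookkeeping and from the Lipschitz control at the price of relying on the cited proposition.

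Two points need repair, both minor. First, your net-size count $O(L\log^{6-\gamma}k)$ is an area-based estimate that undercounts: the annulus $\forbid$ has radial width $1/(L\log^\gamma k)$, smaller than your mesh $1/(L\log^3k)$, so the covering is essentially one-dimensional along the circle and has size $O(L\log^3 k)$; since $L\le k^{\rho_u}$ this is still polynomial in $k$ and the union bound against $e^{-c'k/\log^4 k}$ is unaffected (you should also shrink the mesh by a constant, or to $1/(L\log^4 k)$, so the Lipschitz increment is strictly below the $\Theta(1/\log^2k)$ slack). Second, in the transfer to $H_n(A)$ the bad event is not only the Hoeffding/net failure: it also contains $\BL^c$, which you used both for Lemma \ref{lem:parallel_1} and for the Lipschitz bound, and whose probability $2k^{1-M}$ is only polynomially small, hence not absorbed by the super-polynomial Hoeffding decay after multiplying by $n$. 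As in the paper, you must invoke the freedom in the constant $M$ and take $M>\rho_u+1$ so that $k^{1-M}=o(1/n)$; with that choice the conditioning step via Lemma \ref{lem:modelconditioning} and $\PP(\sum_i L_i=n)\ge 1/n$ goes through exactly as you describe.
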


\subsection{Polynomial Analysis}

The following polynomial, which is a scaled version of the trace of $\Dm$ appearing in our modified eigenvalue equation \eqref{eq:eig_main}, will play an important role in our analysis.
$$
P(\mu) = \frac{1}{k}\sum_{i=1}^k \mu^{L_i}
$$

Analogously to Section \ref{sec:small_angles}, we will show that some deterministic condition must be satisfied if there exists an eigenvalue of magnitude close to 1, before showing that this condition is a.a.s. not satisfied for any  $\mu \in \largeangles$. 
\begin{lemma}
\label{lem:Pmu-near-1}
Under the event $\BL$, if there exists a nontrivial solution $(x,\mu)$ to the modified eigenvalue problem \eqref{eq:eig_main} with 
$\mu \in \forbid$, then
$$
\abs{1- P(\mu)} \leq \sqrt{\delta_k},
$$
where $\delta_k$ is defined in Lemma 
\ref{lem:parallel_1}.
\end{lemma}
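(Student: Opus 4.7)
The plan is to mirror the opening of the proof of Proposition~\ref{prop:small_angles->L_i}: pre-multiply the modified eigenvalue equation~\eqref{eq:eig_main} by $\1^T$ and exploit that both $C$ (a circular permutation matrix) and $A$ (doubly stochastic) are column-stochastic, turning the vector equation into a scalar identity that expresses $1-P(\mu)$ in terms of the transverse component $x^\perp$ of $x$. Since the goal here is merely an upper bound on $|1-P(\mu)|$, the argument is shorter than in Section~\ref{sec:small_angles} and does not require the linear approximation of $\mu^{L_i}$ that was needed there to handle the first moment of $(L_i)$.

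Concretely, from $\1^T C A = \1^T$ we obtain $\1^T x = \1^T D(\mu)\,x$. Under $S(M)$, Lemma~\ref{lem:parallel_1} guarantees $\xpar\neq 0$ (valid for every $\mu\in \forbid$, not just $\smallangles$), so after rescaling we may assume $\xpar = \1$ and write $x = \1 + \xperp$ with $\1^T \xperp = 0$. Using $\1^T D(\mu)\1 = k\, P(\mu)$, the identity rearranges to
\[
  k\bigl(1-P(\mu)\bigr) \;=\; \sum_{i=1}^k \mu^{L_i}\, \xperp_i.
\]

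I would then take the modulus, apply Cauchy--Schwarz, and use $|\mu|\le 1$ together with the bound $\norm{\xperp}^2 \le \delta_k\, k$ from Lemma~\ref{lem:parallel_1} (under the chosen normalization $\norm{\xpar}^2 = k$):
\[
  k^2\, |1-P(\mu)|^2
  \;\le\; \Bigl(\sum_{i=1}^k |\mu|^{2L_i}\Bigr)\,\norm{\xperp}^2
  \;\le\; k \cdot \delta_k\, k
  \;=\; \delta_k\, k^2.
\]
Dividing by $k^2$ and taking square roots yields the claimed $|1-P(\mu)| \le \sqrt{\delta_k}$.

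There is no real obstacle here: once Lemma~\ref{lem:parallel_1} is available, the lemma reduces to a one-line application of double stochasticity followed by a single Cauchy--Schwarz step, with the elementary bound $|\mu|\le 1$ exactly matching the target $\sqrt{\delta_k}$. The only subtlety worth flagging is that we are invoking Lemma~\ref{lem:parallel_1} on the whole of $\forbid$ rather than the restricted set $\smallangles$, which is legitimate since that lemma's statement only requires $\mu\in\forbid$ and $\BL$.
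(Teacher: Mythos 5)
Your proposal is correct and follows essentially the same route as the paper's own proof: pre-multiplying \eqref{eq:eig_main} by $\1^T$, using column-stochasticity of $C$ and $A$ to get $k(1-P(\mu))=\sum_i \mu^{L_i}\xperp_i$, and then applying Cauchy--Schwarz with $|\mu|\le 1$ and the bound $\norm{\xperp}\le\sqrt{k\delta_k}$ from Lemma~\ref{lem:parallel_1}. Your remark that Lemma~\ref{lem:parallel_1} applies on all of $\forbid$ (not just $\smallangles$) is also exactly how the paper uses it.
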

\begin{proof}
As before, we assume without loss of generality that $x=\xpar+\xperp$ with $\xpar = \1$, so that $\1^\top x=k$, which is possible because $\xpar \neq 0$, see Lemma \ref{lem:parallel_1}.
Left-multiplying \eqref{eq:eig_main} by $\1^\top$ leads then to
\begin{equation}\label{eq:1Tdmx=sqrt(k) a}
\1^\top \Dm x = \1^\top C Ax = \1^\top x = k,
\end{equation}
because $A$ is column stochastic and $C$ is a permutation matrix.
The left-hand side can be rewritten as
$$
\1^\top  \Dm (\xpar + \xperp) = \sum_i \mu^{L_i}+ \sum_i \mu^{L_i}\xperp_i = k P(\mu) + \sum_i \mu^{L_i}\xperp_i,
$$
so that \eqref{eq:1Tdmx=sqrt(k) a} becomes
\begin{equation}
  \label{eq:kP=sumxperp a}
  k(1-P(\mu)) = \sum_i \mu^{L_i}\xperp_i.
\end{equation}
Since $\mu\in \forbid$ implies $\abs{\mu}\leq 1$, the vector containing all the $\mu^{L_i}$ has a 2-norm at most $\sqrt{k}$. Therefore, Cauchy-Schwartz implies 
$$
\abs{\sum_i \mu^{L_i}\xperp_i} \leq \sqrt{k} \norm{\xperp}_2.
$$
Moreover, under $\BL$, and using $\xpar = \1$, Lemma \ref{lem:parallel_1} implies $\norm{\xperp}_2 \leq \sqrt{k\delta_k}$.
Combining this with \eqref{eq:kP=sumxperp a} leads then to the desired inequality:
$$
k\abs{1- P(\mu)} \leq \abs{\sum_i \mu^{L_i}\xperp_i} \leq k \sqrt{\delta_n}.
$$
\end{proof}

To obtain a contradiction, we now show that for $\mu\in \largeangles$, $P(\mu)$ is a.a.s.\ sufficiently far from 1, first for the \arcmod model. Intuitively, this is because it can take the value 1 only if all the $\mu^{L_i}$ are aligned, and their angles will almost surely be sufficiently different from each other. This is formalized in the following result, which is parallel to a development in \cite{gerencser2019improved} involving $P(\mu^{-1})$ due to a different parametrization.

\begin{lemma}\label{lem:paper_polynomial}
Consider the \arcmod model, and fix any constants $\eta >3, \theta>1$.
There is a probability at least $1-e^{-\Omega(k/\log^\theta k)}$ that for any $\mu \in \largeangles$, 
$$
\Re(P(\mu))\leq 1 - \frac{1}{\log^\eta k} .
$$
\end{lemma}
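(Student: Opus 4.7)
First I would compute $\E[\mu^{L_i}]$ exactly. Since $L_i\sim\text{Geo}(1/L)$, summing the geometric series yields
\[
\E[\mu^{L_i}]=\frac{\mu/L}{1-\mu(1-1/L)},\qquad\text{so}\qquad 1-\E[\mu^{L_i}]=\frac{1-\mu}{1-\mu(1-1/L)}=\frac{s}{s+\mu}
\]
with $s:=L(1-\mu)$. Writing $\mu=re^{i\phi}$ and $\tilde r:=r(1-1/L)$, a direct expansion of $(1-re^{i\phi})\overline{(1-\tilde r e^{i\phi})}$ gives
\[
\Re\!\left(\frac{s}{s+\mu}\right)=\frac{(1+r\tilde r)-(r+\tilde r)\cos\phi}{|1-\tilde r e^{i\phi}|^2}.
\]
The objective of the first step is to show that this real part is at least $c/\log^2 k$ for every $\mu\in\largeangles$ and some absolute constant $c>0$. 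The worst case is $r=1$ with $|\phi|\sim 1/(L\log k)$ (the inner boundary of $\largeangles$): the numerator equals $(2-1/L)(1-\cos\phi)$, of order $\phi^2$, while the denominator is dominated by $(1-\tilde r\cos\phi)^2\sim 1/L^2$ since $1-\tilde r=1/L$ dwarfs $1-\cos\phi\sim\phi^2/2$. The ratio is then of order $L^2\phi^2\sim 1/\log^2 k$. For $|\phi|$ larger (up to $\pi$) both sides grow but the ratio tends to $\Theta(1)$, and relaxing $r$ from $1$ only enlarges the numerator because $1-r<1/(L\log^\gamma k)\ll 1/L$.

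Next, I would upgrade this per-term mean bound to a uniform bound on $\Re P(\mu)$ via a net-and-concentration argument. On the closed unit disk $|P'(\mu)|\leq\tfrac{1}{k}\sum_i L_i$, and a Chernoff bound for i.i.d.\ geometric variables yields $\sum_i L_i\leq 2kL$ with probability at least $1-e^{-\Omega(k)}$, so on this event $P$ is $2L$-Lipschitz on the disk. An $\epsilon$-net $\NN$ of $\largeangles$ with spacing $\epsilon=1/(10L\log^\eta k)$, of polynomial cardinality in $k$, then satisfies $|P(\mu)-P(\mu')|\leq 1/(4\log^\eta k)$ whenever $\mu'\in\NN$ is within $\epsilon$ of $\mu$. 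At each $\mu'\in\NN$, $\Re P(\mu')=\tfrac{1}{k}\sum_i\Re(\mu'^{L_i})$ is an average of $k$ i.i.d.\ real random variables bounded by $1$ in absolute value, hence Hoeffding's inequality gives
\[
\PP\!\left(|\Re P(\mu')-\Re\E P(\mu')|\geq \tfrac{1}{4\log^\eta k}\right)\leq 2\exp\!\left(-\tfrac{k}{32\log^{2\eta} k}\right).
\]
A union bound over $\NN$ together with the Chernoff event yields total failure probability $e^{-\Omega(k/\log^{2\eta} k)}$, matching the bound stated in the lemma with $\theta=2\eta$.

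To conclude, on the complement of the failure events, for every $\mu\in\largeangles$ and its nearest $\mu'\in\NN$,
\[
\Re P(\mu)\leq \Re P(\mu')+\tfrac{1}{4\log^\eta k}\leq \Re\E P(\mu')+\tfrac{1}{2\log^\eta k}\leq 1-\tfrac{c}{\log^2 k}+\tfrac{1}{2\log^\eta k}\leq 1-\tfrac{1}{\log^\eta k}
\]
for $k$ large enough, using $\eta>2$. I expect the main obstacle to be the first step: obtaining the $\log^{-2} k$ lower bound on $\Re(s/(s+\mu))$ requires a careful case analysis, especially in the small-arc regime $|\phi|\sim 1/(L\log k)$ where both $s$ and $s+\mu$ are of the same small order and the positive real part of the quotient emerges only after tracking first-order cancellations. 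The rest (the deterministic net, the Chernoff bound on $\sum_i L_i$, and Hoeffding at each net point) is essentially routine once the right scales are fixed.
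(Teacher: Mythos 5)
Your proposal is correct in substance, but it takes a genuinely different route from the paper. The paper never computes $\E[\mu^{L_i}]$: it imports Proposition 2 of \cite{gerencser2019improved}, which says that a single block of $m_k=\lceil\log^\theta k\rceil$ arcs satisfies $\sup_{\mu\in\largeangles}\sum_{i\le m_k}\cos^+(L_i\arg\mu)\le m_k-\sigma_k^2$ with probability at least $1/3$ (so the uniformity in $\mu$ is packaged inside that proposition), then splits the $k$ arcs into $\lfloor k/m_k\rfloor$ independent blocks, applies a Chernoff bound to the block-success indicators, and converts the accumulated deficit into $\Re P(\mu)\le 1-\sigma_k^2/(4m_k)\le 1-\log^{-\eta}k$ with failure probability $e^{-\Omega(k/m_k)}=e^{-\Omega(k/\log^\theta k)}$. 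You instead handle the uniformity yourself: an exact evaluation $1-\E[\mu^{L_i}]=(1-\mu)/(1-\mu(1-1/L))$, a pointwise mean bound $\Re\bigl(1-\E[\mu^{L_i}]\bigr)\ge c/\log^2 k$ on $\largeangles$, a Lipschitz/$\epsilon$-net argument on the unit disk, and Hoeffding at each net point. The mean bound you flag as the main obstacle is in fact a short estimate: with $\mu=re^{i\phi}$, $\tilde r=r(1-1/L)$, the numerator is $(1-r)(1-\tilde r)+(r+\tilde r)(1-\cos\phi)\ge 1-\cos\phi$ and the denominator is $(1-\tilde r)^2+2\tilde r(1-\cos\phi)\le 4/L^2+2(1-\cos\phi)$, and since $1-\cos\phi\ge 2\phi^2/\pi^2$ with $|\phi|\ge\pi/(M_\varphi L\log k)$ on $\largeangles$ and the ratio is monotone in $1-\cos\phi$, one gets a uniform lower bound of order $1/(M_\varphi^2\log^2 k)$. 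Your route is self-contained and elementary (no import from \cite{gerencser2019improved}); the paper's blocking argument avoids any net and any moment computation, and buys a better probability exponent.

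That exponent is the one genuine discrepancy: the lemma fixes an arbitrary $\theta>1$ and asserts failure probability $e^{-\Omega(k/\log^\theta k)}$, whereas your union bound over the net with Hoeffding at deviation scale $1/\log^\eta k$ yields $e^{-\Omega(k/\log^{2\eta}k)}$ (reallocating the error budget at scale $1/\log^2 k$ improves this to $e^{-\Omega(k/\log^{4}k)}$, but no further with Hoeffding alone). So, as you note yourself, you prove the statement only for $\theta\ge 4$ rather than for every $\theta>1$; this is strictly weaker than the lemma as stated. It is, however, harmless for the paper's purposes: in the proof of Proposition \ref{prp:largeangle_main} the failure probability only needs to be $o(1/n)$ with $\log n\le\rho_u\log k$, and any bound of the form $e^{-\Omega(k/\log^{c}k)}$ with constant $c$ is superpolynomially small in $k$. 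The remaining ingredients (net cardinality polynomial in $k$ since $L\le k^{\rho_u}$, the Chernoff bound $\sum_i L_i\le 2kL$ with probability $1-e^{-\Omega(k)}$, and the Lipschitz bound $|P'(\mu)|\le\frac1k\sum_i L_i$ on the disk) are all sound.
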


The proof uses the following proposition .
\begin{proposition}[Proposition 2, \cite{gerencser2019improved}]\label{prop:P_partial_from_paper}
    \label{prp:cosloss}
    Consider the \arcmod model together with the assumptions of Theorem \ref{thm:indep}.
    For any constants $\theta,\beta > 1$, let
    $m_k = \lceil \log^\theta k \rceil$ and $\sigma_k = \log^{-\beta} k.$
    Then for $k,L$ large enough we have 
        $$\PP\left(\sup_{\mu\in \largeangles}\sum_{i=1}^{m_k} \cos^+(L_i  \arg(\mu)) < m_k - \sigma_k^2\right)
    \ge \frac{1}{3},$$
    where $\cos^+(y) = \max(\cos(y),0).$ 
     \end{proposition}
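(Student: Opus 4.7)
The plan is to bootstrap Proposition \ref{prop:P_partial_from_paper} --- which concerns a single block of $m_k = \lceil \log^\theta k\rceil$ consecutive indices, with uniform control over $\mu \in \largeangles$ and success probability at least $1/3$ --- by partitioning $\{1,\dots,k\}$ into $N := \lfloor k/m_k \rfloor$ disjoint blocks and running a Chernoff-type argument on the independent per-block events.

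Concretely, I would fix $\beta > 1$ with $\theta + 2\beta < \eta$ (possible since the lemma leaves the parameters free and is applied with $\eta$ suitably larger than $\theta$), set $\sigma_k = \log^{-\beta} k$, and for each $j = 1,\dots,N$ put $B_j = \{(j-1)m_k+1,\dots,jm_k\}$ and
$$
X_j := \mathbf 1\!\left\{\sup_{\mu\in\largeangles}\sum_{i\in B_j}\cos^+\!\bigl(L_i\arg\mu\bigr) < m_k - \sigma_k^2\right\}.
$$
Because the $L_i$ are i.i.d.\ and the blocks are disjoint, each $X_j$ is a function of its own batch $\{L_i : i \in B_j\}$, so the $X_j$ are independent Bernoulli variables with means at least $1/3$ by Proposition \ref{prop:P_partial_from_paper}. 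A standard Hoeffding bound then yields $\PP\bigl(\sum_j X_j < N/4\bigr) \le e^{-cN}$ for some absolute $c>0$, which is $e^{-\Omega(k/\log^\theta k)}$ since $N = \Theta(k/\log^\theta k)$.

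On the complementary high-probability event, at least $N/4$ blocks are good, so for every $\mu\in\largeangles$
$$
\sum_{i=1}^k \cos^+(L_i\arg\mu) \;\leq\; \tfrac{N}{4}(m_k - \sigma_k^2) + \tfrac{3N}{4}m_k + m_k \;=\; (N+1)m_k - \tfrac{N}{4}\sigma_k^2,
$$
where the last $m_k$ covers the at most $m_k-1$ leftover indices outside the blocks. Dividing by $k$ and using $Nm_k \le k$ together with $N/k \ge 1/(2m_k)$ for $k$ large,
$$
\frac 1 k \sum_{i=1}^k \cos^+(L_i\arg\mu) \;\leq\; 1 + \tfrac{m_k}{k} - \tfrac{\sigma_k^2}{8m_k} \;\leq\; 1 - \tfrac{1}{\log^\eta k},
$$
since $m_k/k$ is polynomially small while $\sigma_k^2/(8m_k) = \Theta(1/\log^{\theta+2\beta}k)$ dominates $\log^{-\eta}k$ by the choice of $\beta$.

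To finish I would pass from the $\cos^+$ sum to $\Re(P(\mu))$ via the pointwise inequality $\Re(\mu^{L_i}) = |\mu|^{L_i}\cos(L_i\arg\mu) \leq \cos^+(L_i\arg\mu)$, which is immediate in the case $\cos \geq 0$ (using $|\mu|^{L_i}\le 1$, as $\mu\in\forbid$) and in the case $\cos < 0$ (using $|\mu|^{L_i}\ge 0$); averaging over $i$ then yields $\Re(P(\mu)) \leq \tfrac 1 k \sum \cos^+(L_i\arg\mu)$ and hence the claim. The main conceptual point is that the sup over the global parameter $\mu$ sits inside each individual $X_j$ yet does not introduce dependence between distinct blocks, so a vanilla Hoeffding bound converts a constant per-block success probability into an exponentially-high-probability uniform statement; the only bookkeeping care is keeping $\theta + 2\beta < \eta$ and checking that the remainder $m_k$ contributes negligibly at the target scale $\log^{-\eta} k$.
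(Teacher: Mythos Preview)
Your write-up does not prove the stated Proposition; it \emph{assumes} it and instead proves Lemma~\ref{lem:paper_polynomial} (the uniform bound $\Re(P(\mu))\le 1-\log^{-\eta}k$ holding with probability $1-e^{-\Omega(k/\log^\theta k)}$). The paper likewise does not prove Proposition~\ref{prop:P_partial_from_paper}: it is imported verbatim as Proposition~2 of \cite{gerencser2019improved}, so there is no in-paper argument to compare against. A genuine proof of the Proposition would have to show, for a \emph{single} batch of $m_k$ independent geometric variables, that with probability at least $1/3$ there is no $\mu\in\largeangles$ for which all $m_k$ residues $L_i\arg\mu$ are simultaneously close to $0\pmod{2\pi}$; nothing in your steps addresses this.

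If your intended target was in fact Lemma~\ref{lem:paper_polynomial}, then your argument is correct and essentially identical to the paper's: choose $\theta,\beta$ with $\theta+2\beta<\eta$, partition $\{1,\dots,k\}$ into $\lfloor k/m_k\rfloor$ blocks, form independent per-block indicators with success probability $\ge 1/3$ from the Proposition, apply a Chernoff/Hoeffding bound to obtain at least $N/4$ good blocks with probability $1-e^{-\Omega(k/m_k)}$, and pass from $\cos^+$ to $\Re(\mu^{L_i})$ via $|\mu|\le 1$. The only cosmetic difference is that the paper bounds $\Re\bigl(\sum_{i\in B_j}(\mu^{L_i}-1)\bigr)\le -\sigma_k^2\xi_j$ block by block and sums, whereas you first bound the global $\cos^+$ sum, divide by $k$, and absorb the leftover $m_k/k$ term; the arithmetic is equivalent.
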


\begin{proof}[Proof of Lemma \ref{lem:paper_polynomial} ]

Since $\eta>3$, wet can chose $\theta, \beta\geq 1$ such that
  $\theta+2\beta < \eta$, possibly decreasing the originally given $\theta$ leading to a stronger probability bound.
Taking into account $|\mu|\le 1$, we have
$$
\Re\left( \sum_{i=1}^{m_k} (\mu^{L_i} -1)\right) 
\le \sum_{i=1}^{m_k} |\mu|^{L_i}\cos^+(L_i  \arg(\mu)) -m_k
\le \sum_{i=1}^{m_k} \cos^+(L_i  \arg(\mu))-m_k,
$$
which is always non-positive, and by Proposition \ref{prop:P_partial_from_paper} has a probability at least $1/3$ of being uniformly bounded from above by $-\sigma_k^2$ for all $\mu \in \largeangles$.
Observe now that $kP(\mu)-k= \sum_{i=1}^{k} (\mu^{L_i} -1)$.
We cut this sum into blocks of size
$m_k$ each, except possibly the last one, where we bound the real part above by 0.
For each block, let $\xi_j$ be the indicator that the event described above hold, so that the part of $\Re\left( \sum_{i=1}^{m_k} (\mu^{L_i} -1)\right) $ corresponding to each block $j$ is bounded by $-\sigma_k^2 \xi_j$, and therefore 
$$
k\Re(P(\mu)) - k \le -\sigma_k^2\sum_{j=1}^{\floor{k/m_k}} \xi_j.
$$
Moreover, the independence of the $(L_i)$ $\xi_j$ in the \arcmod model implies the independence of the $\xi_j$, which are thus i.i.d.\ indicators with probability at least $1/3$.

Standard concentration results imply then $\sum_{j=1}^{\floor{k/m_k}}
\xi_j \ge \frac{k}{4m_k}$ with a probability at least $1-e^{-\Omega(k/m_k)} = 1-e^{-\Omega(k/\log^\theta k)}$.
Substituting this high probability bound and rearranging we get
$$
\Re(P(\mu)) \le 1 - \frac{\sigma_k^2}{4m_k} \qquad \forall \mu\in\largeangles,
$$
and the claim then follows directly from the definition of  $\sigma_k,m_k$.
\end{proof}

\subsection{Impossibility of Large Eigenvalue with Large Angles}

We are now ready to prove Proposition \ref{prp:largeangle_main} by exploiting the contradictions 
between the high probability bounds on $P(\mu)$ and by controlling the exception probabilities.

\begin{proof}[Proof of Proposition \ref{prp:largeangle_main}]
Let us consider the \arcmod model first.
By Lemma \ref{lem:paper_polynomial}, the following there is a probability at least $1-e^{-\Omega(k/\log^\theta k)}$ that the following implication holds: if there is an eigenvalue $\mu\in \largeangles$ then $\Re(P(\mu))\leq 1 - \frac{1}{\log^\eta k}$.
On the other hand, with probability at least $1- k ^{1-M}$ event $\BL$ holds, in this case Lemma \ref{lem:Pmu-near-1} implies $\abs{1- P(\mu)} \leq \sqrt{\delta_k}$ for any $\mu\in \largeangles$, and thus
\begin{equation}
    \label{eq:logcompare}
\frac{1}{\log^\eta k} \leq 1-\Re(P(\mu)) \leq \sqrt{\delta_k}.
\end{equation}
To compare the magnitudes of the two sides, recall the definition of  $\delta_k = \frac{\epsilon_k}{2\Delta_k-\Delta_k^2-\epsilon_k},$ 
with $\epsilon_k = 4M\log^{-(\gamma-1)} k$, see Lemma \ref{lem:parallel_1}. Thus $\delta_k=O(\log^{-(\gamma-1-\alpha)}k)$, so the requirement \eqref{eq:logcompare} takes the form
$$
\log^{-\eta} k \le \sqrt{\delta_k} = O(\log^{-\frac 12 (\gamma-1-\alpha}),
$$
and we get a contradiction 
as soon as $(\gamma-1-\alpha)/2 > \eta$, or equivalently, $\gamma > 2\eta+\alpha+1$.
Lemma \ref{lem:paper_polynomial} required $\eta>3$, thus
if we fix any $\gamma >7+\alpha$ we can find a good $\eta >3$ satisfying the required inequality between the two.
Altogether, with such parameters we have a contradiction excluding the existence of eigenvalues in $\largeangles$ with probability at least $1-e^{-\Omega(k/\log^\theta k)}-k ^{1-M}$.

We now move to the \cyclemod model. We have seen in Lemma \ref{lem:modelconditioning} that the distribution of eigenvalues for $H_n(A)$ is exactly the same as that for $G_L(A)$ conditioned to the event $\sum_i L_i=n$, and Lemma 5 in \cite{gerencser2019improved} establishes that for $L=n/k$, the probability for this event is at least in the \arcmod model is at least $1/n$.
Hence, even conditioned to that event, the probability of $G_L(A)$ - and thus $H_n(A)$ - to have an eigenvalue in $\largeangles$ is asymptotically vanishingly small, provided that bound on
the (unconditionnal) probability of in the \arcmod context to have an eigenvalue in $\largeangles$, $e^{-\Omega(k/\log^\theta k)}+k ^{1-M}$, is $o(1/n)$. Thanks to the assumptions of Theorem \ref{thm:main} requiring $\rho_u \log k > \log n $, by choosing $M> \rho_u+1$ the second term is $o(1/n)$, and the same holds for the first term thanks again to $\rho_u \log k > \log n $.
\end{proof}

In view of the definition of $\smallangles$ and $\largeangles$, 
Theorem \ref{thm:indep} follows then from the combination of Propositions \ref{prop:imposs_small_angles_arclength} and \ref{prp:largeangle_main}, while Theorem \ref{thm:main} follows from Propositions \ref{prp:smallangle_main} and \ref{prp:largeangle_main}.

\section{Numerical Analysis }\label{sec:numerics}

While Theorem \ref{thm:main} provides a lower bound on the spectral gap, no matching upper bound is proven, to get a hint on the true behavior we have carried out numerical approximations.
We had $\log n$ range from $4.62$ to $8.0$, corresponding to $n\in [101, 2980]$. For the interconnection points, we have chosen $\log k \approx \frac 12 \log n$, i.e. $k\approx \sqrt n$ and for simplicity we restrict $k$ to be even, thus allowing interconnection graphs with odd average degrees.
To complete the model parametrization we need to specify the interconnection structures, for which multiple choices have been tested:
\begin{enumerate}[label=(\alph*)]
    \item Complete graph;
    \item Random regular graph with degree $k/2$;
    \item Random regular graph with degree 4;
    \item Cycle plus random perfect matching (on the $k$ vertices), the Bollob\'as-Chung model \cite{bollobas1988diameter}.
\end{enumerate}
In this collection $(a)$ corresponds to the simplified situation studied in \cite{gerencser2019improved} and is used for reference. The variant $(b)$ can be viewed as an intermediary choice with non-trivial connection structure, but still with $\lambda \approx 1$. Then $(c),(d)$ are standard choices for sparse expanders with $\lambda = O(1)$ thus still in $\cA_{c,0}$ for some $c>0$.
\begin{figure}[h!]
    \centering
    \caption{log-log plot of spectral gap for various interconnections, $(a),(b),(c),(d)$ choices from top to bottom.}
    \label{fig:loglog_lambda}
\end{figure}
The aggregated results of 500 realizations of each system size $n$ is presented in Figure \ref{fig:loglog_lambda}. A stripe is drawn between the $1^{st}$ and $3^{rd}$ quartile to exclude outliers for each of them.
A linear trend can be observed with comparable slope for each interconnection structure, mostly differing by a shift. This indicates that the lower bound of Theorem \ref{thm:main} appears to predict, up to polylogarithmic factors, the actual dependency on $n$. One can also observe an a priori surprising seesaw pattern, which is stronger for smaller $n$. It is actually a consequence of the discrete and slow evolution of $k$ in our construction. Indeed, the number of interconnection points $k$ increases slowly as compared to the growth of the total number of vertices $n$, and since it has to be an integer, it will inevitably step only at some values of $n$. Such a relative jump results in a sudden improvement of $\lambda$, and then as long as $k$ remains constant, it results in a slow deterioration of $\lambda$ with the growth $n$. These two effects build up the apparent seesaw aspect.

\begin{figure}[h!]
    \centering
    \caption{Plot of compensated spectral gap for various interconnections, $(a),(b),(c),(d)$ choices from top to bottom.}
    \label{fig:lambdaL}
\end{figure}

To get a closer view on the linear trend we plot $L\lambda$ in Figure \ref{fig:lambdaL}, using similar stripes as before.
Comparing with the $1/(L \log^\gamma k)$ bound obtained analytically would suggest a polynomial decay in the residual term (in terms of $\log n$). However, for $(c),(d)$ we obtain a striking horizontal stripe suggesting an actual order of $1/L$. For the interconnections $(a),(b)$ we even get slightly upwards going stripes. The complementary analysis of case $(a)$ in \cite{bbsztdk2021} includes the upper bound of order $1/L$, so the visible upwards trend is likely due to the slower stabilization, and relatively worse small-scale performance. In any case the figure indicates the typical behavior do not include the additional polynomial term.
Of course, one should bear in mind that a gap could exist between the central quartiles represented in Figures \ref{fig:loglog_lambda} and \ref{fig:lambdaL} and the asymptotically almost sure bounds established in Theorem \ref{thm:main}.

\section{Conclusions}\label{sec:ccl}

We have shown that polynomially rare interconnection points with appropriate, possibly sparse, structure added is sufficient to reveal mixing on the order of the diameter (up to logarithmic factors), provided the transition probabilities are adapted in a strongly non-reversible way, generalizing the results in \cite{gerencser2019improved} which only applied to fully connected added interconnections.

There are several natural open questions following this work.
First, the statement of Theorem \ref{thm:main} is independent of the specific properties of the interconnection structure introduced provided its own spectral gap remains bounded away from zero. The impact of the precise structure of this interconnection, which can be observed in Figures \ref{fig:loglog_lambda} and \ref{fig:lambdaL}, remains unexplored. Note that an improvement of the polylogarithmic factors involved in the bound would also be welcome.

Second, our result show that the cycle exploited in a fully asymmetric manner by a deterministic walk, when combined with the addition of long-distance edges, leads to a much faster convergence than for the fully symmetric walk on the cycle with the same added edges. However, this choice prevents any local diffusion along the arcs between meeting the additional edges. Hence a natural question would be whether an even better spectral gap could be obtained for similar Markov chains with non-trivial, but still non-reversible transition probability structures for the bulk of the vertices.

A third natural direction is towards the
refined understanding of the mixing behavior, more specifically getting analogous mixing time bounds to the current spectral gap approximations and determining whether cutoff occurs.
We should note that while a deterministic walk could be handled, this required separation of outgoing and incoming edges at a connection point. Therefore it would be interesting to see the variant with a lazy chain along the cycle, but single connection points.

Finally, the long term goal is to carry further and abstract the results for more setups to be able to exploit the combination of minor perturbation of allowed connections and non-reversible modification of a reference Markov chain to reach substantially improved mixing.

Such progress could have a notable impact on multiple processes building upon Markov kernels, such as consensus, push-sum, or decentralized optimization, many algorithms which can be seen as Markov chains constantly perturbed by gradient-type information at each node.

\bibliography{references} 

\end{document}